% ------------------------------------------------------------------------------
% Beginning of journal.tex
% ------------------------------------------------------------------------------
%
% AMS-LaTeX version 2 sample file for journals, based on amsart.cls.
%
%        ***     DO NOT USE THIS FILE AS A STARTER.      ***
%        ***  USE THE JOURNAL-SPECIFIC *.TEMPLATE FILE.  ***
%
% Replace amsart by the documentclass for the target journal, e.g., tran-l.
%
\documentclass[11pt,a4paper,reqno]{amsart}
\usepackage[foot]{amsaddr}
\usepackage[top=1.5in, left=1.1in, bottom=1in, right=1.2in]{geometry}
\usepackage{amsmath, amsthm, amssymb, amsfonts}
\usepackage{enumitem}
\usepackage{mathtools}
\usepackage{mathrsfs}
\usepackage[usenames, dvipsnames]{color}
\usepackage[T1]{fontenc}
\usepackage[utf8]{inputenc}
\usepackage[square, numbers, comma, sort&compress]{natbib}

\usepackage{epigraph}
 \usepackage{setspace}
\usepackage{subfiles}
\usepackage{graphicx}
\usepackage{hyperref}
\hypersetup{colorlinks=true, urlcolor=blue, linkcolor= blue, citecolor= green, anchorcolor= blue, plainpages=true, hypertexnames=true}

\newcommand{\R}{\mathbb{R}}
\newcommand{\B}{\mathbb{B}}
\newcommand{\N}{\mathbb{N}}
\newcommand{\norm}[1]{\left \| #1  \right \|}
\newcommand{\paren}[1]{\left (  #1 \right ) }
\newcommand{\smap}[3]{#1 : \mathbb{R}^{#2} \longrightarrow \mathbb{R}^{#3} }
\newcommand{\mmap}[3]{#1 : \mathbb{R}^{#2} \rightrightarrows \mathbb{R}^{#3} }
\newcommand{\gph}[1]{ \mathrm{gph} \, #1 }
\newcommand{\rfp}[2]{ (\bar{#1}, \bar{#2}) }

\newcommand{\NUV}{neighborhoods $U$ of $\bar{x} $ and $V $ of $\bar{y}$ }
\newcommand{\mfa}{ \mathrm{~~~~for~all~} }
\newcommand{\at}[2]{at $ \bar{#1} $ for $ \bar{#2} $}

\theoremstyle{plain}
\newtheorem{thm}{\bf Theorem}[section]
\newtheorem{lem}[thm]{\bf  Lemma}
\newtheorem{prop}[thm]{\bf Proposition}
\newtheorem{cor}[thm]{\bf Corollary}

\theoremstyle{definition}

\newtheorem{eg}[thm]{\bf Example}

\newtheorem{rem}[thm]{\bf Remark}

\renewenvironment{proof}{\emph{\textbf{Proof.}} \normalfont}{\hfill $\Box$}

\allowdisplaybreaks

\begin{document}

\title[solution trajectories of generalized equations]{Existence and continuity of solution trajectories of generalized equations with application in electronics}

%    Information for first author
%\author{R. Cibulka(?)}
%    Address of record for the research reported here
%\address{address here}
%    Current address
%\curraddr{current address here}
%\email{cibi@kma.zcu.cz}

%    Information for first author
\author{I. Mehrabinezhad}
%    Address of record for the research reported here
%\address{address here}
%    Current address
%\curraddr{current address here}
\email{i.mehrabinezhad@campus.unimib.it}
%    \thanks will become a 1st page footnote.
%\thanks{thanks here}

%    Information for second author
\author{R. Pini$^*$}
\address{$^*$ Dipartimento di Matematica e Applicazioni, Via Cozzi 55, 20125 Milano-I}
\email{rita.pini@unimib.it}
%\thanks{corresponding author}

%    Information for third author
\author{A. Uderzo}
%\address{third author address}
\email{amos.uderzo@unimib.it}
%\thanks{third author thanks}

%    General info
\subjclass[2010]{46N10, 49J40, 49J53, 49K40, 90C31 ,93C73}

\date{\today}

%\dedicatory{possible dedication here}

\keywords{generalized equations, electronic circuits strong metric regularity, uniform strong metric regularity, perturbations}

\begin{abstract}
We consider a special form of parametric generalized equations arising from electronic circuits with AC sources and study the effect of perturbing the input signal on solution trajectories. Using methods of variational analysis and strong metric regularity property of an auxiliary map, we are able to prove the regularity properties of the solution trajectories inherited by the input signal. Furthermore, we establish the existence of continuous solution trajectories for the perturbed problem. This can be  achieved via a result of uniform strong metric regularity for the auxiliary map.
\end{abstract}

\maketitle
\setstretch{1.3} % Line spacing of 1.3
\section{Introduction}
This paper deals with \emph{parametric generalized equations} of the form
\begin{equation}\label{GE-int}
0 \in f(x) - p(t) + F(x),
\end{equation}
where $\smap{f}{n}{n}$ is a (usually) smooth function, $ \smap{p}{}{n} $ is a function of parameter $t \in [0, 1] $, and $\mmap{F}{n}{n}$ is a set-valued map with closed graph. \\
Generalized equations (when $p$ is a constant function) has been well studied in the literature of variational analysis (see, for instance, \citep{implicit, Mordukhovich} and references therein). Robinson in \citep{robinson1979generalized, robinson1982generalized, robinson1983generalized, robinson} studied in details the case where $F$ is the normal cone at a point $ x \in  \R^n $ (in the sense of convex analysis) to a closed and convex set, and found the setting of generalized equations as an appropriate way to express and analyse problems in complementarity systems, mathematical programming, and variational inequalities.\\
In recent years, the formalism of generalized equations has been used to describe the behaviour of electronic circuits (\citep{adly2, adly, adly3}). In the study of electrical circuits, power supplies (that is, both current and voltage sources) play an important role. Not only their failure in providing the minimum voltage level for other components to work would be a problem, but also small changes in the provided voltage level will affect the whole circuit and the goal it has been designed for. These small changes around a desired value could happen mainly because of failure in precise measurements, ageing process, and thermal effects (see \citep{Iman2017, Sedra}).
Thus, based on the type of voltage/current sources in the circuits, two different cases may be considered:
\begin{enumerate}[topsep=0ex, itemsep=0ex, partopsep=1ex, parsep=1ex, leftmargin = 2ex]
\item[1.] \textbf{static case:}\index{problem ! static case}\\
This is the situation when the signal sources in the circuit are DC (that is, their value is not changing with respect to time).
For practical reasons, we would prefer to rewrite \eqref{GE-int} as $ p \in f(x) + F(x) $, where $ p \in \R^n $ is a fixed vector representing the voltage or current sources in the circuit, and $ x \in \R^n $ represents the mixture of $ n $ variables which are unknown currents of branches or voltages of components. The corresponding \emph{solution mapping} can be defined as follows:
\begin{equation} \label{solution mapping-int}
p \longmapsto S(p) := \left \{ x \in \R^n ~|~ p \in f(x) \, + F(x) \right \}.
\end{equation}
In this framework, small deviations of $ x $ with respect to perturbations of $p$ around a presumed point $ \rfp{p}{x} \in \gph{S} $ could be formulated in terms of local stability properties of $S$ at $\bar{p}$ for $\bar{x}$.
Providing some first order and second order criteria to check the local stability properties of a set-valued map has been the subject of many papers \citep{adly2, adly, durea2012openness, Mordukhovich} to mention a few.
\item[2.] \textbf{dynamic case:} \index{problem ! dynamic case}\\
When an AC signal source (that is, its value is a function of time) is in the circuit, the problem could be more complicated. First of all, the other variables of the model would become a function of time, too. Second, it is not appropriate any more to formulate the solution mapping as $S(p)$. One can consider a parametric generalized equation like \eqref{GE-int} where $ p $ now depends on a scalar parameter $ t \in [0, 1]$\footnote{
In fact, $t$ can belong to any finite interval like $ [0, T]$ for a $T>0$. The starting point $ t = 0$ is considered as the moment that the circuit starts working, in other words, when the circuit is connected to the signal sources and is turned on with a key. We keep the time interval as $ [0, 1] $ in the entire paper for simplicity.\\
},
and define the corresponding solution mapping as
\begin{equation}
S : t \mapsto S(t) = \{ x \in \R^n ~|~  - p(t) + f(x) + F(x) \ni 0 \}.
\end{equation}
The third difficulty rises here: the study of the effects of perturbations of $p $ is not equivalent any more to searching the local stability properties of $S$. \\
Clearly, in this framework for any fixed $t \in [0, 1]$ one has a static case problem. This approach is well known and well studied in the literature, both as a pointwise study (see for instance
\citep{Mordukhovich, Bianchi2013279, Artacho20101149, uderzo2009some} and references therein),
or as a numerical method and for designing algorithms (see for example
\citep{ferreira2016kantorovich, ferreira2016unifying, dontchev2010newton, adly2016newton, adly2015newton}).
In this paper, instead of looking at the sets $S(t)$, we focus on \emph{solution trajectories}, functions like $ x : [0, 1] \to \R^n $ such that
$ x(t) \in S(t),  \mfa t \in [0, 1] $, that is, $x(\cdot) $ is a selection for $S$ over $ [0,1] $.
\end{enumerate}
The main aim of this paper is to investigate the dependence of the solution trajectories on the regularity and changes of the input signal $p$. A side goal is to develop a relationship between pointwise stability properties and their \lq\lq uniform\rq\rq \,version. This has been done with reference to the property of strong metric regularity. \\
The paper is organized as follows: In Section \ref{Preliminaries}, we provide the preliminary definitions. in Section \ref{modeling section}, we give the sketch of the mathematical model for perturbation study of the input signal in the circuits. In Section \ref{Time-varying case}, which contains the main contributions of this paper, we answer questions related to the existence of selections which are regular functions, their relationship with the input signal (Subsection \ref{Continuity of Solution Trajectories}), and their reaction to the small perturbations of the input signal (Subsection \ref{Perturbations of the Input Signal}). We also extend a previous result in \citep{dontchev2013} about uniform strong metric regularity with respect to $t$ over an interval in Subsection \ref{Uniform Strong Metric Regularity-subsection}.

\section{Preliminaries}\label{Preliminaries}
In working with $ \R^n $ we will denote by $ \| x \| $ the Euclidean norm associated with the canonical inner product. The closed ball around $ \bar{x} $  with radius $ r $ is $ \mathbb{B}_r(\bar{x}) = \{ x \in \R^n ~|~~ \| x - \bar{x} \|  \leq r \} $, where the open ball is denoted by int $\mathbb{B}_r(\bar{x})$. We denote the closed unit ball $ \B_1(0) $ by $ \B $.\\
The \textit{graph}, \textit{domain}, and \textit{range} of a given set-valued map $ \mmap{F}{n}{m} $ are defined, respectively, by
gph $ F := \{ (x,y) \in \R^n \times \R^m ~|~ x \in \R^n ~,~ y \in F (x) \}$,
dom $ F := \{ x \in \R^n  ~|~  F (x) \neq \emptyset \}$, and rge $ F := \{ y \in \R^m ~|~ y \in F(x) ~~\mathrm{for~some}~ x \in \R^n \}$. \\
A mapping $ \mmap{S}{m}{n} $ is said to have the \emph{Aubin property}
at $ \bar{y} \in \R^m $ for $ \bar{x} \in \R^n $ if $ \bar{x} \in S(\bar{y}) $, the graph of $ S $ is locally closed at $ (\bar{y}, \bar{x}) $, and there is a constant $ \kappa \geq 0 $ together with neighborhoods $ U $ of $ \bar{x} $ and $ V $ of $ \bar{y} $ such that
\begin{equation}\label{Aubin1}
e(S(y') \cap U, S(y) ) \leq \kappa ~\| y'- y \| \mathrm{~~~for~all~~} y', y \in V,
\end{equation}
where $ e(A, B)  $ is the \emph{excess}\index{excess} of $ A $ beyond $ B $ defined as
\begin{equation}\label{excess}
e \, (A, B) : = \sup_{ x \in A} d(x, B) = \sup_{ x \in A} \, \inf_{y \in B} d(x, y).
\end{equation}
The infimum of $ \kappa $ over all such combinations of $ \kappa, U,$ and $V, $ is called the \emph{Lipschitz modulus} of $ S $ at $ \bar{y}$ for $\bar{x}$ and is denoted by lip $ (S; \bar{y}| \bar{x}) $. \\
$ S $ is said to be \emph{calm} at $ \bar{y} $ for $ \bar{x} $ if $ \rfp{y}{x} \in \gph{S} $, and there is a constant $ \kappa \geq 0 $ along with neighborhoods $ U $ of $ \bar{x} $ and $ V $ of $ \bar{y} $ such that
\begin{equation}\label{calmness1}
e(S(y) \cap U, S(\bar{y})) \leq \kappa ~ \| y - \bar{y} \| \mathrm{~~~~for~all~} y \in V.
\end{equation}
The infimum of $\kappa $ over all such combinations of $\kappa $, $U$ and $V$ is called the \textit{calmness modulus} of $S$ at $\bar{y}$ for $\bar{x}$ and is denoted by clm $(S; \bar{y}|\bar{x})$.\\
A mapping $ S $ is said to have the \textit{isolated calmness} property if it is calm at $ \bar{y} $ for $\bar{x}$ and, in addition, $S$ has a graphical localization at $ \bar{y} $ for $\bar{x}$ that is single-valued at $\bar{y}$ itself (with value $\bar{x}$). Specifically, this refers to the existence of a constant $ \kappa \geq 0 $ and neighborhoods $ U$ of $\bar{x}$ and $V$ of $\bar{y}$ such that
\begin{equation}
\| x - \bar{x} \| ~\leq \kappa~ \| y - \bar{y} \| \mathrm{~~~~when~} x \in S(y) \cap U \mathrm{~~and~~}  y \in V.
\end{equation}
A mapping $ \mmap{F}{n}{m} $ is said to be \emph{metrically regular} at $\bar{x}$ for $\bar{y}$ when $ \bar{y} \in F(\bar{x})$, the graph of $F$ is locally closed at $ \rfp{x}{y} $, and there is a constant $ \kappa \geq 0 $ together with neighborhoods $U$ of $\bar{x}$ and $V$ of $\bar{y} $ such that
\begin{equation}\label{metric regularity}
d \big(x, F^{-1} (y) \big) \leq \kappa~ d(y, F(x)) \mathrm{~~~whenever~~} (x,y) \in U \times V.
\end{equation}
The infimum of $\kappa $ over all such combinations of $\kappa $, $U$ and $V$ is called the \textit{regularity modulus} of $F$ at $\bar{x}$ for $\bar{y}$ and is denoted by reg $(F; \bar{x}|\bar{y})$.\\
A mapping $ F $ with $ \rfp{x}{y} \in \gph{F} $ whose inverse $ F^{-1} $ has a Lipschitz continuous single-valued localization around $\bar{y}$ for $\bar{x}$ will be called \emph{strongly metrically regular} (SMR for short) at $\bar{x}$ for $\bar{y}$.
Indeed, strong metric regularity is just metric regularity plus the existence of a single-valued localization of the inverse (see \citep[Proposition 3G.1, p. 192]{implicit}). \\
$ F $ is called \emph{metrically sub-regular} \at{x}{y} if $ \rfp{x}{y} \in \gph{F} $ and there exists $ \kappa \geq 0 $ along with \NUV such that
\begin{equation} \label{metric sub-regularity}
d ( x, F^{-1} (\bar{y})) ~ \leq ~ \kappa \, d (\bar{y}, F(x) \cap V )  \mfa x \in U .
\end{equation}
The infimum of all $\kappa$ for which (\ref{metric sub-regularity}) holds is the \emph{modulus} of metric sub-regularity, denoted by $ \mathrm{subreg\,} ( F ; \bar{x} | \bar{y}) $. Finally, $F$ is said to be \emph{strongly metrically sub-regular} \at{x}{y} if $\rfp{x}{y} \in \gph{ F} $ and there is a constant $ \kappa \geq 0 $ along
with \NUV such that
\begin{equation} \label{smsr}
\| x - \bar{x} \| \leq \, \kappa \, d(\, \bar{y}, \,F(x) \cap V) ~ \mfa x \in U.
\end{equation}
For a function $ f : \R^d \times \R^n \longrightarrow \R^m $ and a point $ \rfp{p}{x} \in \mathrm{int~dom}\, f$, a function $ \smap{h}{n}{m} $ is said to be an \emph{estimator} of $f$ with respect to $x$ uniformly in $p$ at $ \rfp{p}{x}$ with constant $\mu$ if $  h(\bar{x}) = f (\bar{p},\bar{x}) $ and
$ \widehat{\mathrm{clm}}_x \, (e; (\bar{p}, \bar{x})) \leq \mu < \infty $ for $ e(p,x) = f (p,x) - h(x) $, where $ \widehat{\mathrm{clm}}_x  $ is the uniform partial calmness modulus
\begin{equation*}
\widehat{\mathrm{clm}}_x \, (e; (\bar{p}, \bar{x}))  := \limsup_{\mathclap{\substack{ x \rightarrow \bar{x}, p \rightarrow \bar{p}\\
																							 (p,x)\, \in \, \mathrm{dom }\, e, x \not = \bar{x} }}}
																							   \dfrac{\| e (p,x) - e (p, \bar{x}) \|}{\|x - \bar{x}\|}.
\end{equation*}
It is a \emph{strict estimator} in this sense if the stronger condition holds that
\begin{equation*}
 \widehat{\mathrm{lip}}_x \, (e; (\bar{p}, \bar{x})) \leq \mu < \infty ~~ \mathrm{~~~~~~~~~for~~} e(p,x) = f (p,x) - h(x),
\end{equation*}
and similarly, $ \widehat{\mathrm{lip}}_x $ is the uniform partial Lipschitz modulus. In the case of $ \mu = 0 $, such an estimator is called a \emph{partial first-order approximation}.

\section{Modelling} \label{modeling section}
In this section we will briefly present the practical problem we would face thorough the paper and provide the appropriate mathematical models for it. \\
An electrical circuit is made of some electrical (or electronic) components\footnote{
The word component, in this context, refers to different materials which exhibit a particular electrical behaviour under an electromagnetic force.}
connected together in a special way with wires to serve a specific duty.
For describing a component $Z$, we would look at the current passing through it, referred to as $i_Z$ and the electric potential difference between its terminals, that is voltage over it, referred to as $v_Z$. \\
 The behaviour of the component under different voltages and various currents is usually described with a graph in the $ i - v $ plane, and referred to as the \emph{$ i - v $ characteristic} of the component. These graphs could be presented with functions, like the resistors with the linear relation $v_R = R i_R$ (where $R > 0 $ is a fixed number called Resistance), or set-valued maps, like Diodes. \\
Based on the components in the circuit, and the exactness of the solution required, there is a huge theory and lots of work done around it till now, in electrical engineering literature. We are not exactly interested in this topic, but we will use the setting and rules of circuit theory to formulate our problem.

\subsection{Static case.}
We start with a simple circuit as shown below (Figure \ref{Zener Diode simple circuit}) which involves a \emph{Zener Diode}. Our aim is to find a relation between the given input voltage ($ E $, here) and the variable $ I $, current, in the circuit using the Kirchoff's voltage and current laws.
\begin{figure}[ht]
  \centering
\includegraphics[width=7cm]{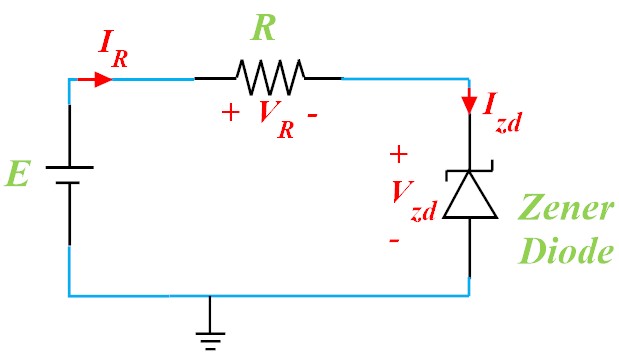}
$ ~~~~~~~~ $
\includegraphics[width=5.5cm]{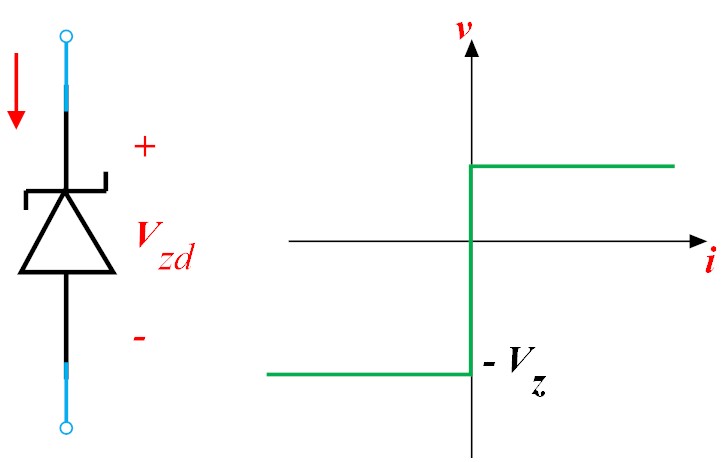}
	\caption{\footnotesize A simple electrical circuit, the schematic and  $i - v$ characteristic of Zener Diode}
	\label{Zener Diode simple circuit}
\end{figure}
\begin{equation}\label{formulation1}
\left. \begin{matrix}
\mathrm{KVL: }  & -E + V_R + V_{zd} = 0~\\
\mathrm{KCL: } & I_R = I_{zd} = I ~~~~~~~~\\
~~& V_R = R \,   I_R \\
~~& V_{zd} \in F(I_{zd})
\end{matrix} \right \}
~~\Rightarrow
~~~E \in R \, I ~ + F(I),
\end{equation}
in which $F(I) $ describes the relation between $I_{zd}$ and $V_{zd}$ as the set-valued map given by the $i - v$ characteristic of the diode. Let us change $E$ to $p$, in order to indicate that it is a \emph{parameter}; and $I$ to $z$, to show that $z$ is the variable, thus we get
\begin{equation}
p \in f(z) + F (z),
\end{equation}
in which $f(z) = R \, z$, in this particular example. For a given $p$, we are interested in the solution mapping defined as
$ S(p) = \left \{  z \in \R ~|~ p \in f(z) + F(z) \right \} $. \\
Regarding different electronic components in the circuit and various circuit schematics, one might need to use some correction matrices to differ those branches that have diodes from others. The general form of the solution mapping in this case would be
\begin{equation}\label{static case}
\begin{matrix}
\Phi (z) : =f(z) \, + B F(Cz) \\
S(p) := \big \{ z \in \R^n ~|~ p \in \Phi (z) \big \},~
\end{matrix}
\end{equation}
where $ p \in \R^n $  is a fixed vector, $ \smap{f}{n}{n} $ is a function, and $\mmap{F}{m}{m}$ is a set-valued map (with certain assumptions), and $B \in \R^{n \times m},~ C \in \R^{m \times n}$ are given matrices. \\
We are going to answer the question concerning the change of source voltage from $\bar{p}$ to $p'$. We try to provide an interpretation of the local stability properties of the solution mapping in terms of the circuit parameters. Let us note that based on the problem one may face during the design process, one of these properties would fit better to his/her demands.

\textbf{Stability Formulation.} Suppose that for a given $\bar{p}$, we know the previous current of \emph{operating point}\footnote{
In the graphical analysis of the circuit, we plot two maps on the same $i - v$ plane: the $i-v $ characteristic of the diode, and the ordered equation (with respect to $i$) of the circuit gained from KVL. The  solution can then be obtained as the equilibrium point, that is the coordinates of the intersection point of the two graphs. This point is called ``operating point''.},
say $\bar{z}$. We also assume that the input change is small, that is, in mathematical terms, $p ' \in \B_r (\bar{p}) =: V$ for some small $r >0$.\\
We are interested in those circuits that keep the small input-change, small. More precisely, the distance between a $z \in S(p')$ and $\bar{z}$, is controlled by the distance $ \|~p' - \bar{p}~\|$. Thus we wish (and search for) $S$ having the following property
\begin{equation} \label{IC-formulation}
 \|~z - \bar{z}~\| \, \leq \, \kappa ~ \|~p' - \bar{p}~\|~~\mathrm{~~when~~~}~z \in S(p') \cap U,~~p' \in V
\end{equation}
where $U$ is a neighborhood of $\bar{z}$, $V$ is a neighborhood of $\bar{p}$, and $\kappa \geq 0 $ is a constant.
This property has already been introduced as isolated calmness. \\
The reason we considered the intersection $ S(p') \cap U $ in the above formulation, is that while it is possible to have different values in $S(p')$, we are just interested in quarantining the existence of a $z$ near enough to $\bar{z}$. \\
$S$ may be not single-valued at $p'$, hence, with a slight modification of the previous formulation, one can obtain the property of calmness as defined in \eqref{calmness1}.
If we are investigating a general local property of the solution mapping and the point $\rfp{p}{z} \in \gph{S} $ does not play a crucial role in our study, we would be interested in the \lq\lq \emph{two-variable}\rq\rq \,version of the previous condition, and hence search for the Aubin property \eqref{Aubin1}.

\textbf{Regularity Formulation.} Up to now, the construction has been built under the assumption that the explicit form of the solution mapping $S(\cdot)$ is in hand and so we can easily calculate values like $S(p')$, which is not true in general. All we are sure we can get from the circuit is $ \Phi = S^{-1} $. It is not always simple (or even possible) to derive the formula of $S$, so we need to provide the proper formulation of the desired properties with $\Phi$.\\
One can start from a point $\rfp{z}{p} \in \gph{\Phi}$, take an arbitrary $z' \in U =: \B_{r_1} (\bar{z}) $, calculate $\Phi(z')$, take a $p \in V =: \B_{r_2} (\bar{p})$, and then ask for the chance of having the output distance $ d \left (  z',\Phi^{-1} (p) \right ) $ being controlled by the input distance $ d \left ( p, \Phi(z') \right ) $, that is,
\begin{equation}
d \left (  z',\Phi^{-1} (p) \right ) \leq \kappa~ d \left ( p, \Phi(z') \right ) \mathrm{~~~whenever~~} (z',p) \in U \times V,
\end{equation}
which is introduced as metric regularity \eqref{metric regularity}. \\Then, make slight modifications by considering the additional condition of single-valuedness at $\bar{z}$ (that is, demanding strong metric regularity); or considering the one-variable version of the above condition (introduced as metric sub-regularity).
The relations that hold between the local stability and metric regularity properties of an arbitrary set-valued map have been well studied in the literature \citep{implicit}. \\
Let us note that if $f$ is not smooth, one can face the problem by means of generalized derivatives as in \citep{izmailov2014, cibulka2016strong}.
\subsection{Dynamic case.}
Let us start with an example. In Figure \ref{fig: A regulator circuit with multiple DC sources}, a simple regulator circuit with a practical model for the diode is shown.
The voltage source is made of $n$ batteries connected to each other in a serial scheme, that provides $ n + 1 $ different levels between $ 0 \, V_s $, and $ 1 \, V_s$.
\begin{figure}[ht]
	\centering
		\includegraphics[width=10.45cm]{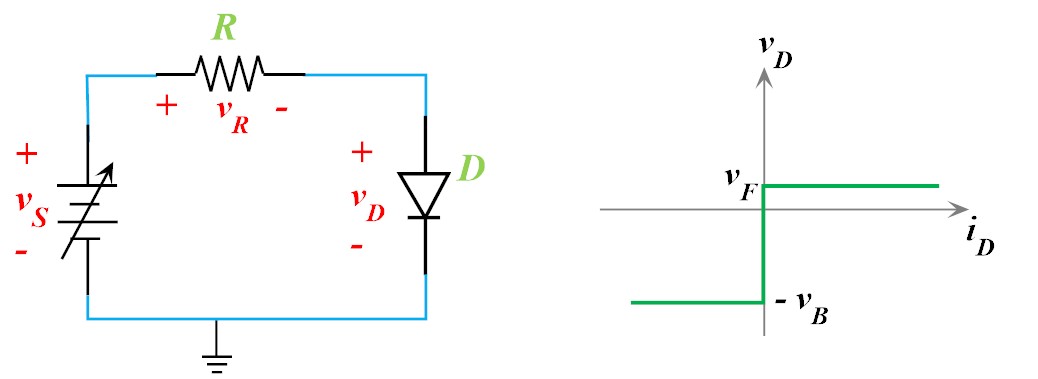}
		%\rule{20em}{0.5pt}
	\caption{A regulator circuit with multiple DC sources}
	\label{fig: A regulator circuit with multiple DC sources}
\end{figure}
Using Kirchhoff's laws and $ i- v $ characteristics of diode and resistor, we obtain that:
{\small
\begin{equation}
\left.\begin{matrix}
\mathrm{KVL: } & - \frac{m}{n} V_S + V_R+ V_D = 0~ \\
\mathrm{KCL: } & I_{D} = I _R =: I~~~~~~~~~~~\\
~~~~& \, V_R = R \, I_R ~~~~~~~~~~ \\
~~~~& \, V_{D} \in F( I_{D}) ~~~~~~ ~~\\
\end{matrix}\right\}~~\Rightarrow
 \quad 0 \in - p + R z + F (z),
\end{equation}}
where $ p = \frac{m}{n} V_S $, $ z = I $, and $ m = 0, 1, \, \dots , n $, indicates the number of turned-on batteries in the circuit. Then, the solution mapping would be
$$ S(p) = \{ \, z \in \R \, | \, - p + R z + F (z) \ni 0 \, \}. $$
In order to find $S(p)$, we can consider the three parts of $\gph{F}$ separately to solve the generalized equation, fortunately, analytically this time.
\begin{enumerate}
\item[$i.$] For $ z > 0 $, with $ F(z) = \{ v_F \} $. \\
Then, we would have an equation, $ - p + R z + v_F = 0 $. Thus, $ z = \dfrac{p - v_F}{R}$ which is only valid for $ z > 0 $, that is, when $ p > v_F $.

\item[$ii.$] For $ z = 0 $, with $ F(z) = [ - v_B, v_F ] $. \\
Then, $ - p + 0 + [ - v_B, v_F ] \ni 0 $. That is, $ z = 0 $ for $ p \in [ - v_B, v_F ] $.

\item[$iii.$] For $ z < 0 $, with $ F(z) = \{ - v_B \} $. \\
Then, again we would have an equation, $ - p + R z - v_B = 0 $. Thus, $ z = \dfrac{p + v_B}{R}$, as long as $ p < - v_B $.
\end{enumerate}
Therefore, $S$ is a single-valued map in this problem, with the graph shown in Figure \ref{fig: Solution mapping for the circuit} (left), and the rule given as:
{\small
\begin{equation}
S(p) = \left \{
\begin{matrix}
~ \Big \{ \dfrac{p + v_B}{R} \Big \} & & p < - v_B \\
 & & \\
~ \Big \{ 0 \Big \} & & ~~~~~~~p \in [ - v_B, v_F ] \\
 & & \\
~ \Big \{ \dfrac{p - v_F}{R} \Big \} & & p > v_F
\end{matrix} \right.
\end{equation}}
When we deal with an AC voltage source, theoretically we can follow the same procedure. For any $ t \in [0, 1]$, use the specific value $p(t)$ and the transformation graph to find the value of $z$ at that time, that is $z(t)$. Then, we can obtain the graph of $z(\cdot)$ with respect to time, similar to the one shown in Figure \ref{fig: Solution mapping for the circuit} (right) for a sinusoid signal.\\
There are two interesting facts to highlight here:
\begin{enumerate}
\item[\textbf{(a)}] Very naturally, instead of asking for the graph of the solution mapping with respect to the input signal, we focused on the graph of the solution mapping with respect to the time. Of course, when the solution mapping is not a function like this problem, the latter expression needs a clarification.

\item[\textbf{(b)}] Dealing with a function as the input signal, we searched for a function as the output signal. In order to keep the notations consistent, yet without ambiguity, we will refer to these functions as $p(\cdot)$, $\smap{p}{}{n}$, $z(\cdot)$, and so on.
\end{enumerate}

 \begin{figure}[ht]
  \begin{minipage}{.40\textwidth}
  		\includegraphics[width=5.85cm]{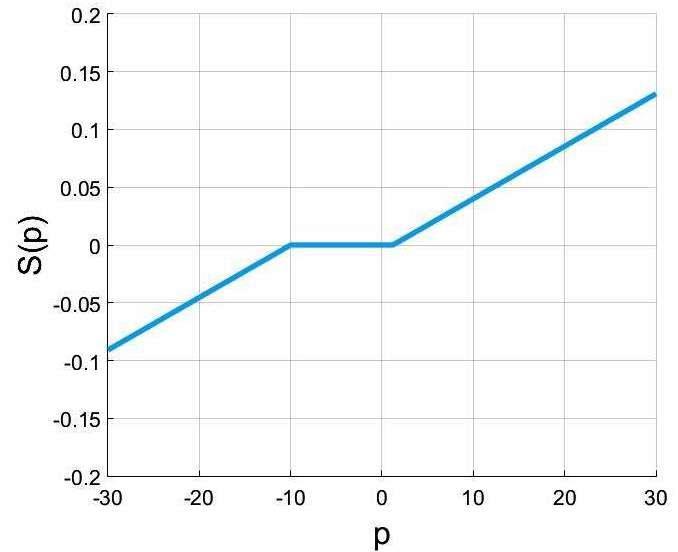}
  \end{minipage}%
  \begin{minipage}{.60\textwidth}
    	\centering
		\includegraphics[width=8.45cm]{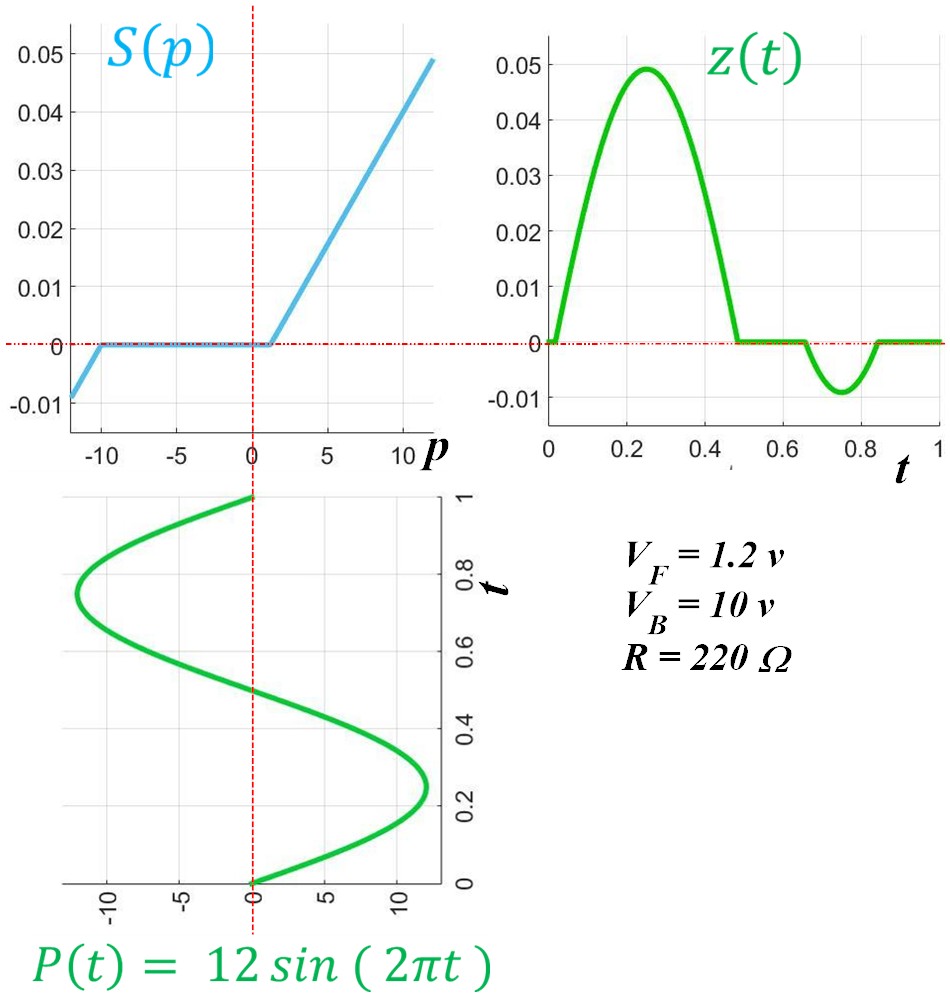}
  \end{minipage}
	\caption{{\small Solution mapping for the circuit in Figure \ref{fig: A regulator circuit with multiple DC sources} (left), graphical method to find the output for a typical input (right)}}
	\label{fig: Solution mapping for the circuit}
\end{figure}

Therefore, in the case of time varying sources, we can assume that $ t $ is a parameter, it belongs to a set like $[0, 1]$, and it would be more appropriate to consider the solution mapping as the (generally set-valued) map that associates to every $ t \in [0, 1]$, the set of all possible vectors $ z$ in $\R^n$ that fits the generalized equation
\begin{equation}\label{pge}
f(z) - p(t) + F(z) \ni 0.
\end{equation}
The \emph{solution mapping} is therefore given by
\begin{equation}\label{solution mapping for time-varying case}
S : t \mapsto S(t) = \{ z \in \R^n ~|~ f(z) - p(t) + F(z) \ni 0 \},
\end{equation}
and a \emph{\textbf{solution trajectory}}\index{solution trajectory} over $[0,1]$ is,
in this case, a function $\bar{z}(\cdot)$ such that $ \bar{z}(t) \in S(t)$  for all $ t \in [0, ~ 1] $, that is, $\bar{z}(\cdot)$ is a selection for $S$ over $[0,1]$. \\
Defining the solution mapping in terms of the parameter $t$, and not directly of the input signal $p$, will cause some difficulties to study the perturbation problem. \\
Comparing to the static case, although for each $t$ one needs to solve a generalized equation of the type discussed in depth in equation \eqref{static case}, the Aubin property of the solution mapping\footnote{
or any other local stability property like calmness or isolated calmness of $S$ or equivalently, metric regularities (all four different definitions) of $S^{-1}$.
}
 at a certain point is not sufficient any more to guarantee the stability of the output with respect to the perturbations of the input signal. In other words, the relation between $S$ and $p(\cdot)$ is not explicitly expressed now.\\
Let us note that when $p$ is a continuous function and under the general assumptions:
\begin{enumerate}
\item[(A1)] $f$ is continuously differentiable in $\R^n$;
\item[(A2)] $F$ has closed graph;
\end{enumerate}
the map $S$ has closed graph. Consider a function $ h: \R \times \R^n \longrightarrow \R^n $, defined as $h(t, v) = - p (t) + f(v) $. Then, the generalized equation (\ref{pge}) can be written as
\begin{equation} \label{parametric generalized equation}
h(t,v) + F(v) \ni 0.
\end{equation}
For any given $(t,z) \in \gph{S}$, define the mapping
\begin{equation} \label{robinson formulation}
v \mapsto G_{t,z}(v) := h(t,z) + \nabla_z h(t,z) \, (v-z) + F(v).
\end{equation}
A point $ (t,z) \in \R^{1+n}$ is said to be a \emph{strongly regular}\index{strongly regular point} point
(see Robinson \citep{robinson}) for the generalized equation (\ref{pge}, or equivalently, \ref{parametric generalized equation}) when $ (t,z) \in \gph {S}$ and the mapping $G_{t,z}$ is strongly metrically regular at $z$ for $0$. That is, there exist constants $a_t, ~ b_t, ~ \lambda_t > 0$ such that the mapping
\begin{equation} \label{smr-constants}
\B_{b_t} (0) \ni y \longmapsto G_{t,z} ^{-1} (y) \cap \B_{a_t} (z(t) )
\end{equation}
is a Lipschitz continuous function with a Lipschitz constant $\lambda_t$.\\
From \cite[Theorem 2B.7, p. 89]{implicit}, one obtains that when $(\bar{t},\bar{z})$ is a strongly regular point for (\ref{pge}), there are open neighborhoods $T$ of $\bar{t}$ and $U$ of $\bar{z}$ such that the mapping
\begin{equation}
T \cap [0, 1] \ni \tau \mapsto S(\tau) \cap U
\end{equation}
is single-valued and Lipschitz continuous on $T \cap [0, ~1]$. Then,
\cite[Theorem 6G.1, p. 426]{implicit} shows that if each point in $ \gph{S} $ is strongly regular, then there are finitely many Lipschitz continuous solution trajectories defined on $ [0, 1] $ whose graphs never intersect each other. In addition, along any such trajectory $ \bar{u}(\cdot) $ the mapping $ G_{t, \bar{u}(t)} $ is strongly regular uniformly in $ t \in [0, 1] $, meaning that the neighborhoods and the constants involved in the definition do not depend on $t$.
Although this theorem is an important result in our study, there is an unpleasant assumption there: $S(t)$  is uniformly bounded. Even if this condition is fulfilled, it is hard to be checked since it requires the whole set $S(t)$ to be clarified and available for any $t \in [0, 1]$.
In the next section it will be shown that the uniform bound could be obtained without this extra assumption.

\section{Time-varying case} \label{Time-varying case}

In this sections we will focus on the problem that is modelled with equations (\ref{pge} - \ref{robinson formulation}). Although we are inspired by Robinson's idea of strongly regular points in defining the auxiliary map (\ref{robinson formulation}), and the techniques in \cite[Theorems 2B.7, 5G.3, and  6G.1]{implicit}, we find it more convenient to do some modifications in the setting in order to adapt it to our problem.\\
Since our aim in this section is the study of the solution trajectories with respect to variations of the input function, $p (t)$, and since working with the function $f(z)$ or with its first order approximation does not play an important role in our case (the proof of this statement will follow soon), we assume to deal with $f(z)$ itself and so to consider the auxiliary mapping
\begin{equation} \label{my auxiliary map}
v \mapsto G_{t}(v) := f(v) - p(t) + F(v).
\end{equation}
For more details on different possible choices of auxiliary maps and how the strong metric regularity would be affected, we state the following proposition:

\begin{prop} [\textbf{Different Auxiliary Maps}] \label{different auxiliary maps} \hfill \\
Suppose that $ \mmap{F}{n}{n}$ is a set-valued map with closed graph, and $ f: \R \times \R^n \longrightarrow \R^n $ is a function admitting $h$ as a strict estimator with respect to $u$ uniformly in $t$, at $\rfp{t}{u}$ with a constant $ \mu $. Given the generalized equation $ f(t,u) + F(u) \ni 0 $, consider the following auxiliary maps:
\begin{eqnarray}
G_{\bar{t},\bar{u}}(v) =h(v) + F(v), \, \\
G_{\bar{t}}(v) = f(\bar{t}, v) + F(v).
\end{eqnarray}
Then, $G_{\bar{t},\bar{u}}$ is SMR at $\bar{u}$ for $0$, if and only if $ G_{\bar{t}}$ is SMR at $\bar{u}$ for $0$, provided that the conditions $ \mu \, \mathrm{reg} \, (G_{\bar{t},\bar{u}} \,;\, \bar{u}) < 1 $, and $ \mu \, \mathrm{reg} \, (G_{\bar{t}} \,;\, \bar{u})  < 1 $ holds.
\end{prop}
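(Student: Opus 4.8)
The plan is to realize $G_{\bar t}$ and $G_{\bar t,\bar u}$ as additive single-valued perturbations of one another, and then invoke the stability of strong metric regularity under Lipschitz perturbations. Set $e(t,v) := f(t,v) - h(v)$. By the definition of (strict) estimator we have $e(\bar t,\bar u) = f(\bar t,\bar u) - h(\bar u) = 0$ and $\widehat{\mathrm{lip}}_u\big(e;\rfp{t}{u}\big) \le \mu < \infty$; taking the constant choice $t \equiv \bar t$ as an admissible way of letting $t \to \bar t$ in the $\limsup$ defining $\widehat{\mathrm{lip}}_u$, it follows that the single-variable map $v \mapsto e(\bar t,v)$ is Lipschitz continuous near $\bar u$ with $\mathrm{lip}\big(e(\bar t,\cdot);\bar u\big) \le \mu$ (in particular it is continuous near $\bar u$, so adding it to a map with locally closed graph preserves local closedness of the graph). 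Moreover $G_{\bar t,\bar u}(\bar u) = h(\bar u) + F(\bar u) = f(\bar t,\bar u) + F(\bar u) = G_{\bar t}(\bar u)$, so that $0 \in G_{\bar t,\bar u}(\bar u)$ if and only if $0 \in G_{\bar t}(\bar u)$ and the statement is even meaningful for both maps; and the two identities $f(\bar t,v) + F(v) = e(\bar t,v) + \big(h(v) + F(v)\big)$ and $h(v) + F(v) = -e(\bar t,v) + \big(f(\bar t,v) + F(v)\big)$ exhibit each of $G_{\bar t}$, $G_{\bar t,\bar u}$ as the other one perturbed by $\pm\,e(\bar t,\cdot)$.

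For the implication that strong metric regularity of $G_{\bar t,\bar u}$ at $\bar u$ for $0$ forces the same property for $G_{\bar t}$, I would put $g := e(\bar t,\cdot)$, so that $g(\bar u) = 0$ and $\mathrm{lip}(g;\bar u)\,\mathrm{reg}(G_{\bar t,\bar u};\bar u) \le \mu\,\mathrm{reg}(G_{\bar t,\bar u};\bar u) < 1$ by hypothesis; the perturbation theorem for strong metric regularity (a Lyusternik--Graves type statement for SMR under single-valued Lipschitz perturbations, available in \cite{implicit}) then yields that $g + G_{\bar t,\bar u} = G_{\bar t}$ is strongly metrically regular at $\bar u$ for $g(\bar u) + 0 = 0$. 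The converse is symmetric: with $g := -e(\bar t,\cdot)$ one again has $g(\bar u) = 0$, $\mathrm{lip}(g;\bar u) = \mathrm{lip}\big(e(\bar t,\cdot);\bar u\big) \le \mu$, and $\mathrm{lip}(g;\bar u)\,\mathrm{reg}(G_{\bar t};\bar u) < 1$ by the second hypothesis, whence $g + G_{\bar t} = G_{\bar t,\bar u}$ is strongly metrically regular at $\bar u$ for $0$.

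I expect the only genuinely delicate point to be the precise matching of the perturbation theorem being invoked: it must be the version for strong metric regularity (plain metric regularity would not by itself restore the single-valued localization of the inverse), it must accept an arbitrary single-valued summand that is merely Lipschitz near the reference point — this is exactly why the hypothesis is phrased through a \emph{strict} estimator, the Lipschitz rather than the calmness version of the uniform modulus being what is needed — and the smallness condition it imposes must be exactly $\mathrm{lip}(g;\bar u)\,\mathrm{reg}(G;\bar u\,|\,0) < 1$. Once the correct reference is pinned down, each direction consumes precisely one of the two hypotheses $\mu\,\mathrm{reg}(G_{\bar t,\bar u};\bar u) < 1$ and $\mu\,\mathrm{reg}(G_{\bar t};\bar u) < 1$, which is why both are assumed; since the statement claims no quantitative control on the new regularity modulus, the bound $\mathrm{reg}(g+G;\,\cdot) \le \mathrm{reg}(G;\cdot)/\big(1-\mathrm{lip}(g)\,\mathrm{reg}(G;\cdot)\big)$ that ordinarily accompanies such theorems is not even needed in the argument.
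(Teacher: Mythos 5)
Your proposal is correct and follows essentially the same route as the paper: both write $G_{\bar t}=G_{\bar t,\bar u}+e(\bar t,\cdot)$ with $e(t,v)=f(t,v)-h(v)$, extract Lipschitz continuity of $e(\bar t,\cdot)$ near $\bar u$ with modulus $\mu$ from the strict-estimator hypothesis, and then apply the perturbation theorem for strong metric regularity under single-valued Lipschitz perturbations (Theorem 2B.8 in \citep{implicit}), consuming one of the two conditions $\mu\,\mathrm{reg}<1$ per direction. No gaps; your remark that $g(\bar u)=e(\bar t,\bar u)=0$ is exactly the observation the paper uses to identify the reference value of the perturbed map as $0$.
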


\begin{proof}
First observe that, by definition of a strict estimator, $h(\bar{u}) = f(\bar{t},\bar{u})$ and so, $ 0 \in G_{\bar{t},\bar{u}}(\bar{u}) $ is equivalent to $0 \in G_{\bar{t}}(\bar{u}) $. Now taking into account the pointwise relation
$$ G_{\bar{t}}(v) =  f(\bar{t}, v) + F(v) = G_{\bar{t},\bar{u}}(v)  + f(\bar{t}, v) - h(v), $$
one can define a map $ \smap{g_{\bar{t},\bar{u}}}{n}{n} $ with $g_{\bar{t},\bar{u}}(v) = f(\bar{t}, v) - h(v) $.
For any $ v_1, v_2 \in U$ (a neighborhood of $\bar{u}$), we get
\begin{equation*}
\begin{split}
\norm{ g_{\bar{t},\bar{u}}(v_1) - g_{\bar{t},\bar{u}}(v_2) } & = \norm{ f(\bar{t}, v_1) - h(v_1) - f(\bar{t}, v_2) + h(v_2) } \\
& = \norm{ e(\bar{t}, v_1) - e(\bar{t}, v_2) } \\
& \leq \mu \norm{v_1 - v_2 }
\end{split}
\end{equation*}
where the last inequality is obtained by definition of strict estimator. Thus, $g_{\bar{t},\bar{u}}$ is Lipschitz continuous around $\bar{u}$.\\
Now one can use \cite[Theorem 2B.8, p. 89]{implicit} with $G = G_{\bar{t},\bar{u}} $ and $g = g_{\bar{t},\bar{u}} $ and, by assuming that
$  \mu \, \mathrm{reg} \, (G_{\bar{t},\bar{u}} \,;\, \bar{u}) < 1 $, to conclude that $ g + G = G_{\bar{t}} $ has a Lipschitz continuous single-valued localization around $ 0 + g(\bar{u})$ for $\bar{u} $.
Since $g_{\bar{t},\bar{u}} (\bar{u}) = h(\bar{u}) - f(\bar{t},\bar{u}) = 0 $, the latter could be expressed as the SMR of $G_{\bar{t}}$ at $\bar{u}$ for $0$.\\
The converse implication is satisfied in a similar way by letting $G = G_{\bar{t}} $, $g = - g_{\bar{t},\bar{u}} $ and assuming
 $ \mu \, \mathrm{reg} \, (G_{\bar{t}} \,;\, \bar{u})  < 1  $.
\end{proof}

\rem
\textbf{(a)} A closer look at the proof reveals that if $h$ is a strict estimator, then the regularity modulus of $G_{\bar{t}}$ and $ G_{\bar{t},\bar{u}} $ are related to each other with $ \kappa' = \dfrac{\kappa}{1 - \kappa \mu} $.\\
Considering a partially first order approximation of $f$ like $h(v) = f(\bar{t},\bar{u}) + \nabla f_u (\bar{t},\bar{u}) (v - \bar{u}) $, will result in the same modulus for auxiliary maps (since $\mu = 0$ in this case).

\textbf{(b)}
One should note that in general, $h_1(\cdot) = f(\bar{t},\cdot)$ is not a strict estimator of $f$ at the reference point. To guarantee this, one needs an extra assumption like the following:
\begin{center}
$f(t,\cdot)$ is Lipschitz continuous, for any $ t $ in a neighborhood of $\bar{t}$.
\end{center}
However, this is not a necessary condition. For example, in the specific case we are interested in, that is $ h(t,u) = f(u) - p(t) $, $h_1$ is automatically a strict estimator with $\mu = 0$ (in fact, a partial first order approximation).\\

\subsection{Continuity of Solution Trajectories} \label{Continuity of Solution Trajectories}
In this subsection we will discuss the relation between the input signal and solution trajectories under the strong metric regularity assumption of the auxiliary map \eqref{my auxiliary map}.
Throughout the whole subsection we will assume that, given a function $p(\cdot)$, a solution trajectory $z(\cdot)$ exists. \\
Let us start with a simple observation that will be used several times in this chapter. The following easy to prove remark will provide a rule for moving from one auxiliary map to another. This simple yet handy result is a consequence of our choice of auxiliary map and our setting.

\begin{rem} \label{G-relations}
Consider the generalized equation \eqref{pge}, and the auxiliary map (\ref{my auxiliary map}). For arbitrary points $t, t' \in [0, 1] $, the following equalities hold
\begin{eqnarray} \label{set-relations}
G_{t} (v) = G_{t'} (v) + p(t') - p(t), \hspace*{0.2cm}\\
G_{t} ^{-1} (w) = G_{t'} ^{-1} \big( w + p(t) - p(t') \big).
\end{eqnarray}
\end{rem}

In the following proposition, we will prove a continuity result for a given solution trajectory under suitable assumptions. One of the assumptions is that \lq\lq different\rq\rq \,trajectories, that is, trajectories without intersections, may not get arbitrary close to each other. We use an expression based on the graphs of trajectories (see \citep{MRDGE2016}, and check \citep[Example 4.2.7.]{Iman2017} for the difficulties that may arise by \lq\lq bad\rq\rq \,formulations).

\begin{prop} [\textbf{Regularity Dependence of Trajectories on Input Signal}] \label{claim1}
For the generalized equation \eqref{pge}, and the solution mapping \eqref{solution mapping for time-varying case}, assume that
\begin{itemize} [topsep=-1ex,itemsep=0ex,partopsep=1ex,parsep=1ex, leftmargin = 7ex]
\item[(i)] $ z(\cdot) $ is a given solution trajectory which is \textbf{isolated}\index{isolated trajectories} from other trajectories; that is,
there is an open set $ \mathcal{O} \in \R^{n+1} $ such that
\begin{equation} \label{isolated-solution-real}
\{ (t, v) ~|~  t \in [0, 1] \mathrm{~and~} 0 \in G_t (v) \} \cap \mathcal{O} = \gph{ z }.
\end{equation}
\item[(ii)] $ p (\cdot) $ is a continuous function;
\item[(iii)] $G_t$ is pointwise strongly metrically regular; i.e. for any $ t \in [0, 1]$, $G_t$ is strongly metrically regular at $z(t) $ for $0$, with constants $a_t, b_t$, and $\kappa_t > 0 $ defined as \eqref{smr-constants}.
\end{itemize}
Then $ z(\cdot) $ is a continuous function.
\end{prop}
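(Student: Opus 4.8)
The plan is to fix an arbitrary $\bar t \in [0,1]$ and show that $z(\cdot)$ is continuous at $\bar t$, by exploiting the strong metric regularity of $G_{\bar t}$ at $z(\bar t)$ for $0$ together with the ``moving rule'' of Remark \ref{G-relations} to transfer information between the maps $G_t$ and $G_{\bar t}$. First I would unpack assumption (iii) at $\bar t$: there are constants $a_{\bar t}, b_{\bar t}, \kappa_{\bar t} > 0$ so that $y \mapsto \sigma(y) := G_{\bar t}^{-1}(y) \cap \mathbb{B}_{a_{\bar t}}(z(\bar t))$ is a single-valued Lipschitz function on $\mathbb{B}_{b_{\bar t}}(0)$ with constant $\kappa_{\bar t}$, and $\sigma(0) = z(\bar t)$. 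By continuity of $p$ at $\bar t$ (assumption (ii)), for $t$ close enough to $\bar t$ the vector $w_t := p(t) - p(\bar t)$ lies in $\mathbb{B}_{b_{\bar t}}(0)$, so $\sigma(w_t)$ is well defined, and by the Lipschitz estimate $\|\sigma(w_t) - z(\bar t)\| \le \kappa_{\bar t}\|p(t) - p(\bar t)\| \to 0$ as $t \to \bar t$. Now by the second identity in Remark \ref{G-relations}, $G_{\bar t}^{-1}(w_t) = G_{\bar t}^{-1}(0 + p(\bar t) - p(t) + w_t)$ — more carefully, I would use $G_t^{-1}(0) = G_{\bar t}^{-1}(0 + p(t) - p(\bar t)) = G_{\bar t}^{-1}(w_t)$, so that $\sigma(w_t)$ is precisely the point of $G_t^{-1}(0) = S(t)$ that lies in $\mathbb{B}_{a_{\bar t}}(z(\bar t))$. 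Call it $\zeta(t)$; then $\zeta(\bar t) = z(\bar t)$ and $\zeta(t) \to z(\bar t)$.

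It remains to identify $\zeta(t)$ with $z(t)$ for $t$ near $\bar t$, and this is where assumption (i) enters and is, I expect, the main obstacle. We know $\zeta(t) \in S(t)$ and $z(t) \in S(t)$, but a priori they could be two different solutions at time $t$; the isolation hypothesis is exactly what rules this out. Since $\zeta(t) \to z(\bar t)$, for $t$ in a small neighborhood $I$ of $\bar t$ the pair $(t, \zeta(t))$ lies in the open set $\mathcal{O}$ of \eqref{isolated-solution-real}; but $(t,\zeta(t))$ also belongs to $\{(t,v) : t\in[0,1],\ 0 \in G_t(v)\}$, hence by \eqref{isolated-solution-real} it belongs to $\gph z$, i.e. $\zeta(t) = z(t)$. (One may need to first shrink $I$ so that $\gph{z}$ restricted to $I$ stays inside $\mathcal O$, which requires a word: continuity of $z$ is what we are trying to prove, so instead I would argue directly that $(t,\zeta(t)) \in \mathcal O$ forces $\zeta(t)=z(t)$ using only that $\gph z \cap \mathcal O = \{(t,v): 0\in G_t(v)\}\cap\mathcal O$ and that $(t,\zeta(t))$ is in the left-hand family.) Combining, for $t \in I$ we get $\|z(t) - z(\bar t)\| = \|\zeta(t) - z(\bar t)\| \le \kappa_{\bar t}\|p(t) - p(\bar t)\|$, which tends to $0$ as $t \to \bar t$; hence $z$ is continuous at $\bar t$, and since $\bar t$ was arbitrary, on all of $[0,1]$.

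The delicate points I would be careful about are: (1) making sure the radius $a_{\bar t}$ of the localization ball is large enough relative to $\kappa_{\bar t}\|p(t)-p(\bar t)\|$ so that $\sigma(w_t)$ genuinely realizes $G_{\bar t}^{-1}(w_t) \cap \mathbb{B}_{a_{\bar t}}(z(\bar t))$ — this is automatic for $t$ close enough to $\bar t$ by continuity of $p$; (2) the direction of the shift in Remark \ref{G-relations}, which must be tracked exactly so that $G_t^{-1}(0)$ maps to the correct argument of $G_{\bar t}^{-1}$; and (3) the logical order in invoking the isolation property, since we cannot presuppose continuity of $z$. As a byproduct the argument shows a bit more than stated: locally around each $\bar t$, $z$ inherits the modulus of continuity of $p$ (with constant $\kappa_{\bar t}$), so if $p$ is Lipschitz, or Hölder, near $\bar t$, so is $z$ — this will presumably be used in the subsequent subsection on regularity of trajectories.
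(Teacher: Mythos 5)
Your proposal is correct and follows essentially the same route as the paper: both build the local single-valued Lipschitz selection $\tau\mapsto G_{\bar t}^{-1}\bigl(p(\tau)-p(\bar t)\bigr)\cap\mathbb{B}_{a_{\bar t}}(z(\bar t))$ from strong metric regularity plus the shift identity of Remark \ref{G-relations}, and then invoke the isolation hypothesis to identify this selection with $z(\cdot)$ near $\bar t$. The only difference is cosmetic: the paper handles the isolation step by contradiction (assuming $z(\tau)\notin\mathbb{B}_{a_t}(z(t))$ and showing the auxiliary trajectory would violate (i)), whereas you argue directly that $(t,\zeta(t))$ eventually lies in $\mathcal{O}$ and hence in $\gph{z}$ — your version sidesteps the circularity concern more cleanly.
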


\begin{proof}
Fix $t \in [0, 1]$. We know that $ (t, z(t) ) \in \gph{S} $, so $ 0 \in G_t (z(t)) $ or $ z (t) \in G_t ^{-1} (0)$. For any $\epsilon > 0$, let
 $ \epsilon_1 := \min \, \{ \frac{\epsilon}{\kappa_t}, b_t \}$, in which $b_t$ is the radius of the neighborhood around $0$ in the assumption $(iii)$.
 By the uniform continuity of $p(\cdot)$, there exists $\delta_p > 0 $ such that
\begin{equation*}
\norm{ p(t) - p(\tau) } < \epsilon_1  \mathrm{~~whenever~~} \| \, \tau - t \, \| < \delta_p .
\end{equation*}
Let $ \delta < \delta_p $ and consider $\tau \in [0, 1] $ such that $ \| \, \tau - t \, \| < \delta $. By definition, $z(\tau) \in G_{\tau} ^{-1} (0)$. Using Remark \ref{G-relations} we obtain
$ z(\tau) \in G_t ^{-1} ( p(\tau) - p(t) ) $. \\
On the other hand, by assumption $ (i) $ we also know that $z(\tau) \in \B_{a_t} (z(t)) $. \\
Indeed, assuming $z(\tau) \not \in \B_{a_t} (z(t)) $, allows us to define a Lipschitz continuous function $\widetilde{z}$ as
$$ \widetilde{z} (\tau) := G_t ^{-1} \big( p(\tau) - p(t)  \big) \cap \B_{a_t} (z(t)) $$
on $\B_{\delta} (t) $. By Remark \ref{G-relations}, $ \widetilde{z} (\tau) \in G_{\tau} ^{-1} (0) $ and thus, is (part of) a solution trajectory.
Now, consider a sequence $ (t_n) $ in $\B_{\delta} (t) $ converging to $t$, and recall that, by definition,
$ z(t) = G_t ^{-1} ( 0 ) \cap \B_{a_t} (z(t))$. Thus, $ \widetilde{z} (t_n) \longrightarrow z(t) $.\\
This means that $\widetilde{z}$ is a solution trajectory that could get arbitrarily close to $z(\cdot)$ at $ \big(t, z(t) \big)$, which contradicts assumption $(i)$.\\
So $ z(\tau) \in G_t ^{-1} ( y ) \cap \B_{a_t} (z(t))$ where $ y \in \B_{b_t} (0)$.\\
Now, by assumption $(iii)$, the mapping $  G_t ^{-1} ( \cdot ) \cap \B_{a_t} (z(t)) $ is single-valued and Lipschitz continuous on $  \B_{b_t} (0) $ with Lipschitz constant $\kappa_t$. So
\begin{equation*}
\left \| z(t) - z(\tau) \right \| \, \leq \, \kappa_t \left \| p(t) - p(\tau) \right \| \, < \, \epsilon.
\end{equation*}
Since $t$ was an arbitrary point in $[0, 1]$, the proof is complete.
\end{proof}

\rem
If we assume that $p(\cdot)$ is a Lipschitz continuous function, then following the
previous proof by considering $ \tau_1, \tau_2 \in \B_{\delta} (t) $, we can obtain
\begin{equation*}
\left \| z(\tau_1) - z(\tau_2) \right \| \, \leq \, \kappa_t \norm { \, [p(\tau_1) - p(t) ] - [ p(\tau_2) - p(t) ] \, } \, \leq \, \kappa_t L_p  \, \norm{ \tau_1 - \tau_2 }.
\end{equation*}
This means that $ z(\cdot) $ is locally Lipschitz on $ [0, 1] $ which is a compact set; so it is globally Lipschitz and we can restate the proposition as the following corollary.

\begin{cor} \label{claim1-corollary}
Assume that
\begin{enumerate}
\item [(i)] $z(\cdot) $ is a given continuous solution trajectory;
\item [(ii)] $p (\cdot) $ is a Lipschitz continuous function;
\item [(iii)] $G_t$ is pointwise strongly metrically regular at $z(t) $ for $0$.
\end{enumerate}
Then $ z(\cdot) $ is a Lipschitz continuous function.
\end{cor}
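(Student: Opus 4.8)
The plan is to re-run the argument from the proof of Proposition~\ref{claim1}, with the role previously played by the isolatedness hypothesis now taken over by the assumed continuity of $z(\cdot)$, and then to upgrade the resulting local Lipschitz estimates to a single global one by a compactness (Lebesgue-number) argument. This is essentially the reasoning sketched in the remark preceding the statement, made precise.

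First I would fix $t \in [0,1]$ and use assumption (iii): strong metric regularity of $G_t$ at $z(t)$ for $0$ provides constants $a_t, b_t, \kappa_t > 0$ such that $\mathbb{B}_{b_t}(0) \ni y \mapsto G_t^{-1}(y) \cap \mathbb{B}_{a_t}(z(t))$ is single-valued and Lipschitz with constant $\kappa_t$. Continuity of $z(\cdot)$ at $t$ yields $\delta_1 > 0$ with $z(\tau) \in \mathbb{B}_{a_t}(z(t))$ whenever $|\tau - t| < \delta_1$, and Lipschitz continuity of $p(\cdot)$ with modulus $L_p$ yields $\|p(\tau) - p(t)\| < b_t$ whenever $|\tau - t| < b_t/L_p$ (the case $L_p = 0$ being trivial since then $p$ is constant). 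Put $\delta_t := \min\{\delta_1, b_t/L_p\}$ and $I_t := (t-\delta_t, t+\delta_t) \cap [0,1]$. For $\tau_1, \tau_2 \in I_t$, Remark~\ref{G-relations} gives $z(\tau_i) \in G_{\tau_i}^{-1}(0) = G_t^{-1}(p(\tau_i) - p(t))$, with $p(\tau_i) - p(t) \in \mathbb{B}_{b_t}(0)$ and $z(\tau_i) \in \mathbb{B}_{a_t}(z(t))$; hence $z(\tau_i)$ is precisely the value at $p(\tau_i)-p(t)$ of the single-valued localization, so
\[
\|z(\tau_1) - z(\tau_2)\| \le \kappa_t \big\| [p(\tau_1) - p(t)] - [p(\tau_2) - p(t)] \big\| = \kappa_t \|p(\tau_1) - p(\tau_2)\| \le \kappa_t L_p\, |\tau_1 - \tau_2| .
\]
Thus $z(\cdot)$ is Lipschitz with constant $\kappa_t L_p$ on the relative neighborhood $I_t$ of $t$.

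Second, I would globalize. The family $\{I_t\}_{t \in [0,1]}$ is an open cover of the compact interval $[0,1]$, so it admits a finite subcover $I_{t_1}, \dots, I_{t_N}$; let $\eta > 0$ be a Lebesgue number for this subcover and set $L := L_p \max_{1 \le i \le N} \kappa_{t_i}$. Given $s < \tau$ in $[0,1]$, partition $[s,\tau]$ by $s = s_0 < s_1 < \cdots < s_k = \tau$ with $s_{j+1} - s_j < \eta$; each consecutive pair lies in a common $I_{t_i}$, so $\|z(s_j) - z(s_{j+1})\| \le L(s_{j+1} - s_j)$, and summing over $j$ gives $\|z(s) - z(\tau)\| \le L|\tau - s|$. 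Hence $z(\cdot)$ is globally Lipschitz on $[0,1]$.

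I expect the only genuinely delicate points to be: (a) recognizing that mere continuity of $z(\cdot)$ — rather than the isolatedness hypothesis of Proposition~\ref{claim1} — is exactly what is needed to keep $z(\tau)$ inside the localization ball $\mathbb{B}_{a_t}(z(t))$, which is the one place where the contradiction argument in Proposition~\ref{claim1} was invoked; and (b) the chaining step that converts the $t$-dependent local constants $\kappa_t L_p$ into a uniform one, which genuinely relies on compactness of $[0,1]$ and explains why the statement is confined to a bounded time interval.
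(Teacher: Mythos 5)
Your proposal is correct and follows essentially the same route as the paper, whose ``proof'' of this corollary is precisely the remark preceding it: the local Lipschitz estimate $\|z(\tau_1)-z(\tau_2)\|\le \kappa_t L_p|\tau_1-\tau_2|$ obtained from the single-valued localization of $G_t^{-1}$, followed by the observation that local Lipschitz continuity on the compact interval $[0,1]$ yields a global Lipschitz constant. You merely supply details the paper leaves implicit — in particular that the continuity hypothesis (i) now does the job the isolatedness hypothesis did in Proposition~\ref{claim1} (keeping $z(\tau)$ inside $\mathbb{B}_{a_t}(z(t))$), and the Lebesgue-number chaining that produces the uniform constant — both of which are accurate refinements rather than a different argument.
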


\subsection{Uniform Strong Metric Regularity} \label{Uniform Strong Metric Regularity-subsection}
In this subsection we focus our attention on the uniform strong metric regularity of $ G_t $.
In order to clarify the next statement, we remind that pointwise strong metric regularity of $ G_t $ for all $ t \in [0, 1]$, guarantees for each $t \in [0, 1]$ the existence of constants $a_t, \, b_t, \, \kappa_t > 0$ such that the mapping
\begin{equation*}
\B_{b_t} (0) \ni y \longmapsto G_{t} ^{-1} (y) \cap \B_{a_t} (z(t) )
\end{equation*}
is single valued and Lipschitz continuous with a Lipschitz constant $\kappa_t$. It is worthwhile noting that the radii $a_t, \, b_t $ can be decreased provided that a suitable proportion is kept. The details are expressed in following lemma.

\begin{lem} [\textbf{Proportional Reduction of Radii}] \label{ratio law}
Let $H$ be a strongly metrically regular map at $\bar{x}$ for $\bar{y}$ with a Lipschitz constant $\kappa > 0$ and neighborhoods $ \B_a (\bar{x}) $ and  $ \B_b (\bar{y})$. Then for every positive constants
\begin{center}
 $ a' \leq a $ and $ b' \leq b $ such that $ \kappa b' \leq a' $,
\end{center}the mapping $H$ is strongly metrically regular with the corresponding Lipschitz constant $ \kappa $ and neighborhoods $ \B_{a'} (\bar{x}) $ and  $ \B_{b'} (\bar{y}) $.
\end{lem}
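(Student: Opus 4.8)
The plan is to show that the Lipschitz single-valued localization attached to the \emph{larger} neighborhoods, once restricted to $\mathbb{B}_{b'}(\bar{y})$, automatically takes its values in $\mathbb{B}_{a'}(\bar{x})$, and is therefore forced to coincide with the localization one would build from the \emph{smaller} radii. Concretely, write $s:\mathbb{B}_b(\bar{y})\to\R^n$ for the map $s(y):=H^{-1}(y)\cap\mathbb{B}_a(\bar{x})$; by hypothesis $s$ is single-valued, Lipschitz on $\mathbb{B}_b(\bar{y})$ with constant $\kappa$, and $s(\bar{y})=\bar{x}$ (since $\bar{y}\in H(\bar{x})$ forces $\bar{x}\in H^{-1}(\bar{y})\cap\mathbb{B}_a(\bar{x})=\{s(\bar{y})\}$).

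First I would fix an arbitrary $y\in\mathbb{B}_{b'}(\bar{y})$. Because $b'\le b$, the value $s(y)$ is well defined, and the Lipschitz estimate together with $s(\bar{y})=\bar{x}$ yields $\norm{s(y)-\bar{x}}\le\kappa\,\norm{y-\bar{y}}\le\kappa b'\le a'$, i.e. $s(y)\in\mathbb{B}_{a'}(\bar{x})$; hence $s(y)\in H^{-1}(y)\cap\mathbb{B}_{a'}(\bar{x})$, so this intersection is nonempty. On the other hand, since $a'\le a$ we have $H^{-1}(y)\cap\mathbb{B}_{a'}(\bar{x})\subseteq H^{-1}(y)\cap\mathbb{B}_a(\bar{x})=\{s(y)\}$. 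Combining the two inclusions gives $H^{-1}(y)\cap\mathbb{B}_{a'}(\bar{x})=\{s(y)\}$ for every $y\in\mathbb{B}_{b'}(\bar{y})$. Thus the candidate localization $y\mapsto H^{-1}(y)\cap\mathbb{B}_{a'}(\bar{x})$ is single-valued on $\mathbb{B}_{b'}(\bar{y})$ and equals the restriction $s|_{\mathbb{B}_{b'}(\bar{y})}$, which is Lipschitz with the same constant $\kappa$; local closedness of $\gph H$ at $(\bar{x},\bar{y})$ is untouched. This is exactly strong metric regularity of $H$ at $\bar{x}$ for $\bar{y}$ with Lipschitz constant $\kappa$ and neighborhoods $\mathbb{B}_{a'}(\bar{x})$ and $\mathbb{B}_{b'}(\bar{y})$.

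I do not anticipate a real obstacle: the entire argument reduces to the one-line bound $\kappa b'\le a'$. The only point that deserves a word of care is making sure that shrinking the $\bar{x}$-ball does not allow a previously excluded branch of $H^{-1}$ to enter $\mathbb{B}_{a'}(\bar{x})$ — but this is immediate from $\mathbb{B}_{a'}(\bar{x})\subseteq\mathbb{B}_a(\bar{x})$, which makes the small-ball intersection a subset of the singleton large-ball intersection, so no new branch can appear.
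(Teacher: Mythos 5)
Your proof is correct and follows essentially the same route as the paper's: both use the Lipschitz estimate $\norm{s(y)-\bar{x}}\leq \kappa\norm{y-\bar y}\leq \kappa b'\leq a'$ to show the value of the large-ball localization already lies in $\B_{a'}(\bar x)$, and then conclude that the small-ball localization coincides with its restriction. Your write-up is if anything slightly more explicit than the paper's about why single-valuedness persists (the two-inclusion argument), but the idea is identical.
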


\begin{proof}
Since $ B_{b'} (\bar{y}) \subset B_{b} (\bar{y})$ by assumption, $ H^{-1} (y) \cap \B_a (\bar{x}) = : x $ for every $y \in B_{b'} (\bar{y})$.
Taking into account that $ H^{-1} (\cdot) \cap \B_a (\bar{x}) $ is a Lipschitz continuous function on $ B_{b} (\bar{y}) $, and by definition, $ \bar{x} : = H^{-1} (\bar{y}) \cap \B_a (\bar{x}) $, we get:
\begin{equation*}
\norm{ x - \bar{x} } = \norm{ \big( H^{-1} (y) \cap \B_a (\bar{x}) \big) - \big( H^{-1} (\bar{y}) \cap \B_a (\bar{x}) \big) } \, \leq \, \kappa \norm{ y - \bar{y} } \, \leq \, \kappa b'.
\end{equation*}
So, under the condition $ \kappa b' \leq a' $, we get $ x \in \B_{a'} (\bar{x}) $. \\
Indeed, in this case any $ y \in B_{b'} (\bar{y}) $ will be in the domain of $ H ^{-1} (\cdot) \cap \B_{a'} (\bar{x}) $. Then, the single-valuedness and Lipschitz continuity are the consequences of dealing with the same map (that is, $\gph{H}$).
\end{proof}

\begin{thm} [\textbf{Uniform Strong Metric Regularity}] \label{claim2} \hfill \\
For the generalized equation \eqref{pge}, and the solution mapping \eqref{solution mapping for time-varying case}, assume that
\begin{enumerate}
\item [(i)] $z(\cdot) $ is a given continuous solution trajectory;
\item [(ii)] $p (\cdot) $ is a continuous function;
\item [(iii)] $G_t$ is pointwise strongly metrically regular at $z(t) $ for $0$.
\end{enumerate}
Then there exist constants $a, \, b, \, \kappa > 0$ such that for any $ t \in [0, \,1] $, the mapping
\begin{equation*}
\B_{b} (0) \ni y \longmapsto G_{t} ^{-1} (y) \cap \B_{a} (z(t) )
\end{equation*}
is single valued and Lipschitz continuous with a Lipschitz constant $\kappa$.
\end{thm}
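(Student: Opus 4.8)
The plan is to exploit compactness of $[0,1]$ together with the pointwise strong metric regularity at each $t$, using Remark \ref{G-relations} to transport the single-valued localization of $G_t^{-1}$ to a neighbourhood of nearby parameters $\tau$, and Lemma \ref{ratio law} to adjust radii so they can be taken uniform. First I would fix $t\in[0,1]$ and invoke assumption (iii): there are constants $a_t,b_t,\kappa_t>0$ so that $y\mapsto G_t^{-1}(y)\cap\B_{a_t}(z(t))$ is single-valued and $\kappa_t$-Lipschitz on $\B_{b_t}(0)$. Using Lemma \ref{ratio law}, I may shrink $b_t$ so that $\kappa_t b_t\le a_t/2$ (keeping $a_t$, say, or halving both), which will give room to absorb the perturbation terms $p(\tau)-p(t)$. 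By continuity of $p(\cdot)$ (assumption (ii)), choose $\delta_t>0$ with $\|p(\tau)-p(t)\|<b_t/2$ whenever $|\tau-t|<\delta_t$; and by continuity of $z(\cdot)$ (assumption (i)) shrink $\delta_t$ further so that $\|z(\tau)-z(t)\|<a_t/4$ for such $\tau$.

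The core step is to show that for $\tau$ with $|\tau-t|<\delta_t$, the map $y\mapsto G_\tau^{-1}(y)\cap\B_{a_t/2}(z(\tau))$ is single-valued and $\kappa_t$-Lipschitz on $\B_{b_t/2}(0)$. By Remark \ref{G-relations}, $G_\tau^{-1}(y)=G_t^{-1}(y+p(\tau)-p(t))$. For $y\in\B_{b_t/2}(0)$ the argument $y+p(\tau)-p(t)$ lies in $\B_{b_t}(0)$, so $G_t^{-1}(\cdot)\cap\B_{a_t}(z(t))$ is single-valued and $\kappa_t$-Lipschitz there; call its value $w(y)$. Then $\|w(y)-z(t)\|\le\|w(y)-w(0')\|+\|w(0')-z(t)\|$ where $0'$ is chosen so that $w$ evaluated there equals $z(t)$ (indeed $z(t)=G_t^{-1}(0)\cap\B_{a_t}(z(t))$, and $0$ corresponds to $y=p(t)-p(\tau)$, which is in $\B_{b_t}(0)$), giving $\|w(y)-z(t)\|\le\kappa_t\|y+p(\tau)-p(t)-(p(t)-p(\tau))\|$ — I would organise the estimate so that $\|w(y)-z(t)\|\le\kappa_t b_t\le a_t/2$, hence $w(y)\in\operatorname{int}\B_{a_t}(z(t))$, and then $\|w(y)-z(\tau)\|\le\|w(y)-z(t)\|+\|z(t)-z(\tau)\|\le a_t/2+a_t/4<a_t$. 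This shows the intersection with $\B_{a_t/2}(z(\tau))$ picks out exactly the same point as the intersection with $\B_{a_t}(z(t))$ (single-valuedness is not lost because we stay inside the region where $\gph G_t$ has the localization), and Lipschitz continuity in $y$ with constant $\kappa_t$ is inherited directly since $y\mapsto y+p(\tau)-p(t)$ is an isometry. Thus every $\tau\in(t-\delta_t,t+\delta_t)\cap[0,1]$ enjoys strong metric regularity of $G_\tau$ at $z(\tau)$ for $0$ with the \emph{same} constants $a_t/2,\,b_t/2,\,\kappa_t$.

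Finally, the collection $\{(t-\delta_t,t+\delta_t)\}_{t\in[0,1]}$ is an open cover of the compact interval $[0,1]$, so it admits a finite subcover indexed by $t_1,\dots,t_N$. Setting $a:=\min_i a_{t_i}/2$, $b:=\min_i b_{t_i}/2$ and $\kappa:=\max_i\kappa_{t_i}$, and applying Lemma \ref{ratio law} once more (reducing each local pair of radii to $(a,b)$ while preserving the proportionality $\kappa b\le a$, which I arrange by also requiring $\kappa b\le a$ in the choice of $b$), every $t\in[0,1]$ lies in some $(t_i-\delta_{t_i},t_i+\delta_{t_i})$ and therefore $y\mapsto G_t^{-1}(y)\cap\B_a(z(t))$ is single-valued and $\kappa$-Lipschitz on $\B_b(0)$. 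This completes the proof. The main obstacle I anticipate is the bookkeeping in the core step: one must verify carefully that replacing the reference ball $\B_{a_t}(z(t))$ by $\B_{a_t/2}(z(\tau))$ does not introduce spurious solution branches — i.e., that the localized inverse of $G_\tau$ we construct is genuinely the localization guaranteed by strong metric regularity and not merely a selection — and this is exactly where Lemma \ref{ratio law} (ensuring the computed value stays strictly inside the original ball) together with the continuity-based smallness of $\|z(\tau)-z(t)\|$ does the work.
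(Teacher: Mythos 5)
Your argument is correct in substance, but it reaches the conclusion by a genuinely different route than the paper. The paper proves both steps by contradiction: it assumes the uniform radii (resp.\ the uniform Lipschitz constant) fail along sequences $a_n=1/n$, $b_n=1/n^3$ (resp.\ $\kappa_n=n$), extracts a convergent subsequence $t_n\to t_0$ from $[0,1]$, and uses the strong metric regularity at $t_0$ together with the translation identity of Remark \ref{G-relations} to rule out multi-valuedness, emptiness, and unbounded Lipschitz constants separately. You instead use compactness directly: the same translation identity shows that the single-valued Lipschitz localization of $G_t^{-1}$ at $0$ for $z(t)$ propagates, with essentially the same constants, to all $\tau$ in a neighbourhood of $t$ (continuity of $p$ keeps the shifted argument inside $\B_{b_t}(0)$, continuity of $z$ keeps the reference ball inside $\B_{a_t}(z(t))$), and a finite subcover plus Lemma \ref{ratio law} then yields the uniform $a,b,\kappa$. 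The two proofs consume the same ingredients, but yours is shorter, constructive, and produces the uniform constants explicitly as minima/maxima over a finite subcover --- which is in fact how the paper's worked example computes them --- whereas the contradiction argument spares one from tracking how the radii must shrink but is less transparent.

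Two pieces of bookkeeping need repair, though neither threatens the argument. First, your displayed Lipschitz estimate double-counts the shift: since $z(t)=G_t^{-1}(0)\cap\B_{a_t}(z(t))$ and $w(y)=G_t^{-1}\bigl(y+p(\tau)-p(t)\bigr)\cap\B_{a_t}(z(t))$, the correct bound is $\|w(y)-z(t)\|\le\kappa_t\|y+p(\tau)-p(t)\|\le\kappa_t b_t$, not $\kappa_t\|y+p(\tau)-p(t)-(p(t)-p(\tau))\|$. Second, with your choices $\kappa_t b_t\le a_t/2$ and $\|z(\tau)-z(t)\|<a_t/4$ you only get $\|w(y)-z(\tau)\|\le 3a_t/4$, so $w(y)$ need not lie in $\B_{a_t/2}(z(\tau))$ and nonemptiness of the localized intersection fails as stated; either tighten to $\kappa_t b_t\le a_t/4$ or take the target ball to be $\B_{3a_t/4}(z(\tau))$ (both choices still keep that ball inside $\B_{a_t}(z(t))$, which is what guarantees at-most-single-valuedness). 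With those adjustments the proof closes.
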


\begin{proof}
We prove the statement in two steps. First, by showing the mentioned map must be single-valued without caring about the Lipschitz regularity, and then by proving it is a Lipschitz continuous function.
\par
\textit{\textbf{STEP 1. Single-valuedness: }}\\
We show that there exist $ a, b > 0 $ such that for any $ t \in [0, 1]$, the map
\begin{equation} \label{usmr-01}
\B_b (0) \ni y \longmapsto G_{t} ^{-1} (y) \cap \B_{a} (z(t) )
\end{equation}
is single-valued.
We argue by contradiction, by assuming that for any $ a, \, b > 0 $, there exists $t_{a,b} \in [0,\, 1] $ such that \eqref{usmr-01} does not hold. In particular, take
$ a_n =\frac{1}{n},  b_n = \frac{1}{n^3} $; then, for every $n \in \N$, there exists $t_n (:= t_{a_n, b_n} ) \in [0, \, 1] $ such that
\begin{equation}
\B_{b_n} (0) \ni y \longmapsto G_{t_n} ^{-1} (y) \cap \B_{a_n} (z(t_n) )
\end{equation}
is not single-valued, which is equivalent to
\begin{enumerate}
\item[\textit{Case 1.}] there exists $ y_n \in \B_{b_n} (0) $ such that the cardinality of the set $  G_{t_n} ^{-1} (y_n) \cap \B_{a_n} (z(t_n) ) $ is grater than one, or
\item[\textit{Case 2.}] there exists $ y'_n \in \B_{b_n} (0) $ such that the set $  G_{t_n} ^{-1} (y'_n) \cap \B_{a_n} (z(t_n) ) $ is empty\footnote{
In other words, the mapping
\begin{equation*}
y \longmapsto G_{t_n } ^{-1} (y) \cap \B_{a_n} (z(t_n ) )
\end{equation*}
for at least a point $ y \in \B_{b_n} (0) $, is empty, or multivalued, that is, it has at least two values.
}.
\end{enumerate}
By replacing $ (t_n) $ with a subsequence (if necessary), from the compactness of $[0, 1] $ in $\R$, we can assume that $ t_n \longrightarrow t_0 \in [0, \, 1]$. We will try to reach a contradiction in each case.

\textit{Case 1. Multi-valuedness}\\
Suppose that, for any $ n \in \N$, there exist $t_n \in [0, \, 1] $ and at least a $y_n \in \B_{b_n } (0)$ such that
 $z_n ^1, z_n ^2 \in G_{t_n } ^{-1} (y_n) \cap \B_{a_n} (z(t_n ) ) $ with $z_n ^1 \not = z_n ^2$. \\
By assumption $(iii)$, there exist constants $a_{t_0}, b_{t_0}, \kappa_{t_0} > 0 $ such that the mapping
\begin{equation*}
\B_{b_{t_0}} (0) \ni w \longmapsto G_{t_0} ^{-1} (w) \cap \B_{a_{t_0}} (z(t_0) )
\end{equation*}
is single valued and Lipschitz continuous with Lipschitz constant $ \kappa_{t_0} $.\\
Make $b_{t_0} > 0$ smaller if necessary so that
\begin{equation}\label{relation1}
 \kappa_{t_0} b_{t_0} \leq a_{t_0}.
\end{equation}
For $n$ large enough (i.e. $ n > N_0 \in \N$), one can have the following:
\begin{equation}\label{relation2}
b_n < \dfrac{b_{t_0}}{2},~ ~ \norm{ p(t_n) - p(t_0) } < \dfrac{b_{t_0}}{2},~ ~ \norm{ z(t_n) - z(t_0) } < \dfrac{a_{t_0}}{2}, ~~ a_n < \dfrac{a_{t_0}}{2}, ~~ \kappa_{t_0} < n^2,
\end{equation}
in which the second and third inequalities are the results of continuity assumptions of $p(\cdot)$ and $z(\cdot)$, respectively. Then,
\begin{equation*}
\norm{ z_n ^1 - z(t_0) } \leq \norm{z_n ^1 - z(t_n)} + \norm{z(t_n) - z(t_0)} \leq  a_n + \frac{a_{t_0}}{2} < a_{t_0}.
\end{equation*}
The same holds for $z_n ^2 $; thus, $z_n ^1, z_n ^2  \in \B_{a_{t_0}} (z(t_0 ) ) $. On the other hand, $ z_n ^i \in G_{t_n } ^{-1} (y_n)$ for $ i = 1, 2$, and Lemma \ref{G-relations} implies that $ z_n ^i \in G_{t_0 } ^{-1} \big(y_n + p(t_n) - p(t_0) \big) $.
But
$$ \norm{ y_n + p(t_n) - p(t_0) } \, \leq \, \norm{y_n - 0} + \norm{p(t_n) - p(t_0)} \, \leq \, b_n + \dfrac{ b_{t_0} } {2} \, < \,  b_{t_0}. $$
Thus,
 $ (y_n + p(t_n) - p(t_0)) \in \B_{b_{t_0}}(0)$, which is a contradiction since, in that neighborhood,
$ G_{t_0 } ^{-1} (\cdot) \cap  \B_{a_{t_0}}(z(t_0)) $ is single-valued. \\
\textit{Case 2. Emptiness}\\
Let us now suppose that, for any $n \in \N$, there exist $t_n \in [0, \, 1] $ and at least a point $y'_n \in \B_{b_n } (0)$ such that $ G_{t_n } ^{-1} (y'_n) \cap \B_{a_n}  (z(t_n ) )$ is empty. \\
For $n$ large enough, the inequalities in (\ref{relation1}) and (\ref{relation2}) hold, and we have already proved that $ y \in \B_{b_n} (0) $ implies
$ y + p(t_n) - p(t_0)) \in \B_{b_{t_0}} (0) $. Therefore, since $y'_n \in \B_{b_n} (0)$, the mapping $  G_{t_0} ^{-1} (y'_n + p(t_n) - p(t_0)) \cap \B_{a_{t_0}} (z(t_0) ) $ is single-valued. In particular, it implies that $G_{t_0} ^{-1} (y'_n + p(t_n) - p(t_0)) \not = \emptyset $.
Let $z$ be a point in $ G_{t_0} ^{-1} (y'_n + p(t_n) - p(t_0))$. Then, by using Lemma \ref{G-relations}, we obtain
$ y'_n \in G_{t_n} (z) $, in particular, $G_{t_n} ^{-1} (y'_n) $ is not empty. The contradiction assumption implies that
\begin{equation}\label{contradiction point}
\norm{ z - z(t_n) } > a_n.
\end{equation}
We will show the inconsistency between the contradiction assumption and the assumptions of the theorem with this inequality. In order to proceed, let us first prove that the mapping
\begin{equation} \label{usmr-02}
\B_{\frac{b_{t_0}}{2}} (0) \ni y \longmapsto G_{t_n} ^{-1} (y) \cap \B_{\frac{a_{t_0}}{2}} (z(t_n) )
\end{equation}
is single-valued and Lipschitz continuous with Lipschitz constant $  \kappa_{t_0} $. \\
As a matter of fact, we have already seen that $ G_{t_n} ^{-1} (y) \neq \emptyset $ for every $ y \in \B_{b_n} (0) $, and $ G_{t_n} ^{-1} (y) \cap \B_{a_{t_0} / 2} (z(t_n) ) $ is not multi-valued. Thus, it only remains to show that
$ G_{t_n} ^{-1} (y) \cap \B_{a_{t_0} / 2} (z(t_n) ) \not = \emptyset $ for every $ y \in \B_{b_{t_0} / 2} (0) $.\\
Denote by $z_y$ the point $ z_y := G_{t_0} ^{-1} \big( y + p(t_n) - p(t_0) \big) \cap \B_{a_{t_0}} (z(t_0) ) $. First observe that, by Lemma \ref{G-relations}, $ z_y \in G_{t_n} ^{-1} (y) $.\\
On the other hand, by definition, $ z(t_n) \in G_{t_n} ^{-1} (0) $ and by using Lemma \ref{G-relations}, we get $ z(t_n) \in G_{t_0} ^{-1} (p(t_n) - p(t_0)) $. We also know that $ z ( t_n) \in \B_{a_{t_0}} (z(t_0) ) $ (from the inequalities in \eqref{relation2}). The single-valuedness of $ G_{t_0} ^{-1} (.) \cap \B_{a_{t_0}} (z(t_0) ) $ over $ \B_{b_{t_0}} (0) $ allows us to write
$ z(t_n) = G_{t_0} ^{-1} \big( p(t_n) - p(t_0) \big) \cap \B_{a_{t_0}} (z(t_0) ) $ without ambiguity. Thus, we have:
\begin{equation*}
\begin{split}
\norm{ z_y - z(t_n) } & = \scalebox{0.9}{ $ \norm{ [ \, G_{t_0} ^{-1}\paren{ y + p(t_n) - p(t_0) } \cap \B_{a_{t_0}} (z(t_0) ) \, ] -  [ \, G_{t_0} ^{-1} (p(t_n) - p(t_0)) \cap \B_{a_{t_0}} (z(t_0) ) \, ] } $ } \\
& \leq \kappa_{t_0} \norm{  y + p(t_n) - p(t_0) - ( p(t_n) - p(t_0) ) } \\
& \leq \kappa_{t_0} \norm{y - 0 } \\
& \leq \, \kappa_{t_0} \frac{b_{t_0}}{2} \, \leq \, \frac{1}{2} a_{t_0} .
\end{split}
\end{equation*}
Which means $ z_y \in G_{t_n} ^{-1} (y'_n) \cap  \B_{\frac{a_{t_0}}{2}} (z(t_n) )$.\\
A similar reasoning reveals the Lipschitz continuity of the map $ G_{t_n} ^{-1} (\cdot) \cap \B_{\frac{a_{t_0}}{2}} (z(t_n) ) $.\\
Indeed, taking any $y_i \in \B_{b_{t_0}/2} (0) $, one can define $z_i:= G_{t_n} ^{-1} (y_i) \cap \B_{a_{t_0}/2} (z(t_n) ) $ for $i =1, 2$ without ambiguity. Using the second and third inequalities in (\ref{relation2}), we have
\begin{equation*}
\begin{split}
& z_i \in G_{t_n} ^{-1} (y_i) = G_{t_0} ^{-1}\paren{ y_i + p(t_n) - p(t_0) } \mathrm{~~~~and~~~~}  y_i + p(t_n) - p(t_0)  \in \B_{b_{t_0}} (0) \\
& z_i \in  \B_{a_{t_0}/2} (z(t_n) ), \mathrm{~~~and~~~} \norm{ z(t_n) - z(t_0) } < a_{t_0}/2, \mathrm{~~so~~} z_i \in  \B_{a_{t_0}} (z(t_0) ).
\end{split}
\end{equation*}
Thus we are allowed to use the single-valuedness and Lipschitz property of $G_{t_0} ^{-1}$ to obtain
\begin{equation*}
\begin{split}
\norm{ z_1 - z_2 } & = \norm{ [ \, G_{t_n} ^{-1} (y_1) \cap \B_{a_{t_0}/2} (z(t_n) ) \, ] -  [ \, G_{t_n} ^{-1} (y_2) \cap \B_{a_{t_0}/2} (z(t_n) ) \, ] } \\
& = \scalebox{0.88}{ $\norm{ [ \, G_{t_0} ^{-1}\paren{ y_1 + p(t_n) - p(t_0) } \cap \B_{a_{t_0}} (z(t_0) ) \, ] -  [ \, G_{t_0} ^{-1} (y_2 + p(t_n) - p(t_0)) \cap \B_{a_{t_0}} (z(t_0) ) \, ] } $ } \\
& \leq \kappa_{t_0} \norm{y_1 - y_2 },
\end{split}
\end{equation*}
which reveals the Lipschitz property of the map in \eqref{usmr-02}.

Now, having the strong metric regularity of $ G_{t_n}(\cdot)$ with constants $ \frac{a_{t_0}}{2}, \frac{b_{t_0}}{2}, \kappa_{t_0} $ in hand, we use Lemma \ref{ratio law} with $ a' = a_n = \frac{1}{n} \leq \frac{a_{t_0}}{2} $, $ b' = b_n = \frac{1}{n^3} \leq \frac{b_{t_0}}{2} $, to obtain the strong metric regularity of
$ G_{t_n}(\cdot)$ with constants $ a_n, b_n, \kappa_{t_0} $ (reminding that the last inequality of (\ref{relation2}) guarantees $  \kappa_{t_0} b' \leq a' $).
Now for the specific $y'_n \in \B_{b_n } (0) $ claimed before, there exists $ z \in G_{t_n} ^{-1} (y'_n) \cap \B_{a_n} (z(t_n) )$ which
contradicts (\ref{contradiction point}).\\
Therefore, till now we have proved that there exist $ a^*,b^*  > 0 $ such that the mapping
\begin{equation*}
\B_{b^*} (0) \ni y \longmapsto G_{t } ^{-1} (y) \cap \B_{a^*} (z(t) )
\end{equation*}
is single-valued for any $ t \in [0, 1] $. \\
\textit{\textbf{STEP 2. Lipschitz Continuity:}}\\
Being sure that we deal with a function, now we proceed by claiming that there exist $ b \leq b^* $, and  $ \kappa > 0 $ such that the mapping
\begin{equation*}
\B_{b} (0) \ni y \longmapsto G_{t } ^{-1} (y) \cap \B_{a^*} (z(t) )
\end{equation*}
is Lipschitz continuous with Lipschitz constant $\kappa $ for all $ t \in [0, 1] $.\\

We will prove the assertion by contradiction. Suppose the claim is false; then, for any $ b \leq b^* $, and any $ \kappa > 0 $, there exists $t_{b, \kappa}  \in [0, \, 1] $ such that the mapping
\begin{equation*}
\B_{b} (0) \ni y \longmapsto G_{t_{b, \kappa} } ^{-1} (y) \cap \B_{a^*} (z(t_{b, \kappa}) )
\end{equation*}
is not Lipschitz with constant $\kappa$. Since we already know that this map is single-valued, the contradiction assumption yields that for every $ \kappa > 0 $, there exist $y_1, y_2 \in \B_b (0) $, with $ y_1 \not = y_2 $ such that
\begin{equation*}
\norm{ \, \left [  G_{t_{b, \kappa} } ^{-1} (y_1) \cap \B_{a^*} (z(t_{b, \kappa}) ) \right ] - \left [  G_{t_{b, \kappa} } ^{-1} (y_2) \cap \B_{a^*} (z(t_{b, \kappa}) ) \right ] \, } \, > \, \kappa \norm{ y_1 - y_2 }.
\end{equation*}
For any $ n \in \N $, let $ b_n : = \min \{\, \frac{1}{n^3}, b^* \, \}, \kappa_n := n $ and set $ t_n := t_{b_n, \kappa _n } $. Then there exist at least
$y_n ^1, y_n ^2 \in \B_{b_n} (0) $, with $ y_n ^1 \not = y_n ^2 $ such that
\begin{equation*}
\left \| y_n ^1 -  y_n ^2 \right \| \, n \, < \, \norm{ \, \left [  G_{t_n } ^{-1} (y_n ^1) \cap \B_{a^*} (z(t_n) ) \right ] - \left [  G_{t_n } ^{-1} (y_n ^2) \cap \B_{a^*} (z(t_n) ) \right ] \, }
\end{equation*}
Let $g_n ^i := G_{t_n } ^{-1} (y_n ^i) \cap \B_{a^*} (z(t_n) ) $ for $ i = 1, 2 $, and assume that $t_n$ converges to a point, say $t_0$. \\
For $n$ large enough, one has the following:
\begin{equation*}\label{relation4}
b_n < \dfrac{b_{t_0}}{2},~ ~ \norm{ p(t_n) - p(t_0) } < \dfrac{b_{t_0}}{2},~ ~ \norm{ z(t_n) - z(t_0) } < \dfrac{a^*}{2},~~ \kappa_{t_0} < n^2.
\end{equation*}
On the one hand, $g_n ^i \in G_{t_n } ^{-1} (y_n ^i) = G_{t_0 } ^{-1}\paren{ \, y_n ^i + p(t_n) - p(t_0) \, } $ and the above conditions imply that
 $ \paren{ \, y_n ^i + p(t_n) - p(t_0) \, } \in \B_{b_{t_0}} (0) $.\\
On the other hand, $ g_n ^i \in B_{a^*} (z(t_n) ) $. We will show that $ g_n ^i \in B_{a_{t_0}} (z(t_0) ) $. \\
Indeed, let $w_n ^i := G_{t_0 } ^{-1}  \paren{\, y_n ^i + p(t_n) - p(t_0)} \cap B_{a_{t_0}} (z(t_0) ) $ for $ i = 1, 2 $. \\
Since $  \paren{\, y_n ^i + p(t_n) - p(t_0)} \longrightarrow 0 $, by the continuity of
$G_{t_0 } ^{-1}  (\cdot) \cap B_{a_{t_0}} (z(t_0) ) $ around $0$, we get
$$ w_n ^i \longrightarrow z(t_0) =  G_{t_0 } ^{-1}  (0) \cap B_{a_{t_0}} (z(t_0) ). $$
Thus, for any $\epsilon > 0 $, there exists $N_{\epsilon} \in \N$ such that, for $ n > N_{\epsilon}$, one has
$ \norm{ w_n ^i - z(t_0) } < \epsilon $. Let $ \epsilon = a^* / 2 $. Then,
\begin{equation*}
\begin{split}
\norm{ w_n ^i - z(t_n) } & \leq \norm{ w_n ^i - z(t_0) } + \norm{ z(t_0) - z(t_n) } \\
& <  a^* / 2 +  a^* / 2 \\
& <  a^*,
\end{split}
\end{equation*}
which means that $ w_n ^i \in \B_{a^*} (z(t_n)) $. Combining with $ w_n ^i \in  G_{t_n } ^{-1} (y_n ^i) $ (obtained by using Lemma \ref{G-relations}),
we get that
$  w_n ^i \in  G_{t_n } ^{-1} (y_n ^i) \cap \B_{a^*} (z(t_n)) $.
Hence, by the single-valuedness of $ G_{t_n } ^{-1} (\cdot) \cap \B_{a^*} (z(t_n)) $, we can conclude that $  w_n ^i =  g_n ^i \in B_{a_{t_0}} (z(t_0) ) $.
Then, the assumption (iii) of the theorem results in $ \norm {g_n ^1 -  g_n ^2} \, \leq \, \kappa_{t_0} \norm{y_n ^1 -  y_n ^2} $. So
\begin{equation*}
\norm{y_n ^1 -  y_n ^2} \, n \, < \, \norm {g_n ^1 -  g_n ^2} \, \leq \, \kappa_{t_0} \norm{y_n ^1 -  y_n ^2},
\end{equation*}
which is a contradiction, since it implies boundedness of the sequence $ (\kappa_n ) := ( n )$. Combining the two steps ends the proof.
\end{proof}

Let us note that under the stronger assumption of Lipschitz continuity for $p(\cdot)$ and $z(\cdot)$, a simpler and more direct proof can be provided. The proof will be in the direction of \cite[Theorem  6G.1]{implicit} without the uniformly boundedness assumption over the sets $S(t)$, and the special structure of the single-valued part here allows us to bypass the use of \cite[Theorem 5G.3]{implicit}; refer to \citep[Theorem 4.2.12]{Iman2017} for more details.

\subsection{Perturbations of the Input Signal} \label{Perturbations of the Input Signal}
In this subsection we try to take into account the small variations of the function $ p (\cdot) $. More precisely, we consider a continuous function $\widetilde{p} (\cdot) $ such that $ \norm { \widetilde{p} (t) - p(t) } < \epsilon $ for any $ t \in [0, \, 1] $, and for a suitably small $ \epsilon > 0 $. We deal with the perturbed form of problem (\ref{pge}). To be more specific, we consider the generalized equation
\begin{equation} \label{perturbed GE}
f(z) - \widetilde{p} (t) + F(z) \ni 0,
\end{equation}
denote the corresponding solution mapping with $\widetilde{S}$,
\begin{equation} \label{perturbed S}
\widetilde{S} : t \mapsto \widetilde{S}(t) = \{ z \in \R^n ~|~ f(z) - \widetilde{p} (t) + F(z) \ni 0 \},
\end{equation}
and define the auxiliary map $ \mmap{ \widetilde{G_t} }{n}{n}$ as $  \widetilde{G_t} (v) = f(v) + F(v) - \widetilde{p}(t) $. \\
The easy-to-check equalities
\begin{eqnarray}
\widetilde{G_t}(v)  = G_t(v)  + p(t) - \widetilde{p}(t) \hspace*{0.4cm} \label{perturbation-relation1} \\
\widetilde{G_t} ^{-1} (w) = G_t ^{-1}  \big( w + \widetilde{p}(t) - p(t) \big)   \label{perturbation-relation2}
\end{eqnarray}
for each $ t \in [0, 1] $, will be useful for connecting the strong metric regularity properties of $\widetilde{G_t} $ to those of $G_t$ as described in the following lemma. Once more, we want to indicate that the straightforward equalities \eqref{perturbation-relation1} and \eqref{perturbation-relation2} are a consequence of our choice of the auxiliary maps and the special form of the single-valued part of the generalized equation \eqref{perturbed GE}.

\begin{lem} [\textbf{Perturbation Effect on the Auxiliary Map}] \label{Perturbation Effect on the Auxiliary Map} \hfill \\
Assume that $ p(\cdot) $ and $ \widetilde{p}(\cdot) $ are continuous functions from $ [0, 1]$ to $ \R^n$ with $ \norm { \widetilde{p} (t) - p(t) } < \epsilon $ for any
$ t \in [0, 1] $. If $G_t$ is strongly metrically regular at $\bar{u}$ for $0$
\big(i.e. $ (\bar{u},0) \in \gph \, G_t $ and there exist constants $a_t, \, b_t, \, \kappa_t > 0$ such that the mapping
\begin{equation*}
\B_{b_t} (0) \ni y \longmapsto G_{t} ^{-1} (y) \cap \B_{a_t} (\bar{u} )
\end{equation*}
is single valued and Lipschitz continuous with Lipschitz constant $ \kappa_t $\big),
then for any positive $ \epsilon < b_t $ the mapping
\begin{equation} \label{perturbed G}
w \longmapsto \widetilde{G_t} ^{-1} (w) \cap \B_{a_t} (\bar{u})
\end{equation}
is a Lipschitz continuous function on
$\B_{b_t - \epsilon} (0)$ with Lipschitz constant $\kappa_t$.
\end{lem}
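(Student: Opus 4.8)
The plan is to reduce the assertion entirely to the already-assumed strong metric regularity of $G_t$ via the translation identity \eqref{perturbation-relation2}, which represents $\widetilde{G_t}^{-1}$ as a shift of $G_t^{-1}$. First I would fix $t \in [0,1]$ and abbreviate $\Delta := \widetilde{p}(t) - p(t)$, so that by hypothesis $\norm{\Delta} < \epsilon < b_t$. Then for every $w \in \B_{b_t - \epsilon}(0)$ the triangle inequality gives $\norm{w + \Delta} \le \norm{w} + \norm{\Delta} < (b_t - \epsilon) + \epsilon = b_t$, hence $w + \Delta \in \B_{b_t}(0)$ (in fact in its interior). This is the one bookkeeping point on which everything hinges: the domain has to shrink by exactly $\epsilon$ so that the shifted argument stays in the ball $\B_{b_t}(0)$ where the unperturbed localization is controlled.

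Next, intersecting the identity $\widetilde{G_t}^{-1}(w) = G_t^{-1}(w + \Delta)$ of \eqref{perturbation-relation2} with $\B_{a_t}(\bar{u})$ yields
\[
\widetilde{G_t}^{-1}(w) \cap \B_{a_t}(\bar{u}) = G_t^{-1}(w + \Delta) \cap \B_{a_t}(\bar{u}).
\]
Since $w + \Delta \in \B_{b_t}(0)$ and $G_t$ is strongly metrically regular at $\bar{u}$ for $0$ with the stated constants, the right-hand side is a singleton (in particular nonempty); hence so is the left-hand side. This establishes that the map \eqref{perturbed G} is a genuine single-valued function on $\B_{b_t - \epsilon}(0)$.

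Finally, for the Lipschitz estimate I would take $w_1, w_2 \in \B_{b_t - \epsilon}(0)$, apply the displayed identity to each, note that $w_1 + \Delta, w_2 + \Delta \in \B_{b_t}(0)$, and invoke the $\kappa_t$-Lipschitz property of $y \mapsto G_t^{-1}(y) \cap \B_{a_t}(\bar{u})$ on $\B_{b_t}(0)$:
\[
\norm{ \bigl[\widetilde{G_t}^{-1}(w_1) \cap \B_{a_t}(\bar{u})\bigr] - \bigl[\widetilde{G_t}^{-1}(w_2) \cap \B_{a_t}(\bar{u})\bigr] } \le \kappa_t \norm{(w_1 + \Delta) - (w_2 + \Delta)} = \kappa_t \norm{w_1 - w_2},
\]
where the shift $\Delta$ cancels. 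This gives the claimed Lipschitz constant $\kappa_t$ and completes the argument. There is no serious obstacle here: the proof is purely the translation trick afforded by the special additive form of the auxiliary maps, and the only things requiring care are the radius computation above and the (trivial) observation that translating the argument does not change the Lipschitz modulus.
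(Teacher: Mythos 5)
Your argument is correct and is essentially the paper's own proof: both rest on the translation identity \eqref{perturbation-relation2}, the observation that $w+\widetilde{p}(t)-p(t)$ stays in $\B_{b_t}(0)$ when $w\in\B_{b_t-\epsilon}(0)$, and the transfer of single-valuedness and the $\kappa_t$-Lipschitz estimate from $G_t^{-1}(\cdot)\cap\B_{a_t}(\bar{u})$. No differences worth noting.
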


\begin{proof}
Considering that the intersecting ball $ \B_{a_t} (\bar{u} )$ is the same for both maps $ G_{t} ^{-1} $ and $ \widetilde{G_t} ^{-1} $, the proof should include the following steps:
\begin{enumerate}
\item[1.] the sets $  \widetilde{G_t} ^{-1} (w)  $ and also $ \widetilde{G_t} ^{-1} (w) \cap \B_{a_t} (\bar{u}) $ are not empty for any $ w \in \B_{b_t - \epsilon} (0) $;
\item[2.] the mapping \eqref{perturbed G} is not multivalued;
\item[3.] the mapping \eqref{perturbed G} is a Lipschitz continuous function (with constant $\kappa_t$).
\end{enumerate}

Choose any $w_1, w_2 \in \B_{b_t - \epsilon} (0)$. From assumption we get $ \big( w_i + \widetilde{p}(t) - p(t) \big) \in \B_{b_t} (0) $ for $ i = 1, 2 $. Then, the pointwise strong metric regularity of $G_t$, lets us define
$ u_i := G_t ^{-1}  ( w_i + \widetilde{p}(t) - p(t) ) \cap \B_{a_t} (\bar{u}) $ for $ i = 1, 2 $.
By using \eqref{perturbation-relation2}, one obtains $ u_i \in  \widetilde{G_t} ^{-1} (w_i) $. In fact, $ u_i =  \widetilde{G_t} ^{-1} (w_i) \cap \B_{a_t} (\bar{u}) $.
Thus, steps 1. and 2. are proved. \\
But pointwise strong metric regularity of $G_t$ provides more information, that is
$$ \norm{u_1 - u_2} \leq \kappa_t \norm{w_1 - w_2}. $$
Therefore, step 3. is also proved.
\end{proof}

\begin{rem} \label{Perturbation Effect on the Auxiliary Map-remark}
\textbf{(a)} A careful look at the proof reveals that the lemma could be also expressed in the following way:
\begin{center}
If $G_t$ is SMR at $\bar{u}$ for $0$, then $\widetilde{G_t}$ is SMR at $\bar{u}$ for $  p(t) - \widetilde{p}(t) $.
\end{center}
In this case, $\epsilon$ could be as big as $b_t$.\\
In fact, in this case one can consider \cite[Theorem 3G.3, p. 194]{implicit} with $ F = G_t,~ \rfp{x}{y} =(\bar{u}, 0 ), ~ \kappa = \kappa_t $, and $g(\cdot) = p(t) - \widetilde{p}(t) $ which is a constant function with respect to $u$, so is obviously Lipschitz with any $ \mu < \kappa^{-1} $, and immediately obtain the SMR at $\bar{u}$ for
$  p(t) - \widetilde{p}(t) $ of the map $ ( G_t + p(t) - \widetilde{p}(t) ) $ which is exactly $\widetilde{G_t}$.\\
\textbf{(b)} Under the assumptions of Theorem \ref{claim2}, we would have uniform strong metric regularity for $G_t$ at $z(t) $ for $0$ and the proof shows that we obtain uniform strong metric regularity for
 $\widetilde{G_t}$ at $z(t) $ for $ p(t) - \widetilde{p}(t) $, too.
\end{rem}

Finally, we have provided enough instruments to declare the main result of this section, that is the existence of a solution trajectory $\widetilde{z} (\cdot) $ close to $z(\cdot) $ that inherits its continuity properties.
We may recall that, under the assumptions of Theorem \ref{claim2}, existence of positive constants $a, b $, and $\kappa$ not depending on $t$ is guaranteed for uniform strong metric regularity. Since the following theorem satisfies those assumptions, we will use the uniform constants without ambiguity.

\begin{thm} [\textbf{Existence of a Continuous Trajectory for the Perturbed Problem}] \label{Existence of a Continuous Trajectory for the Perturbed Problem}
For the generalized equations \eqref{pge}, and \eqref{perturbed GE} and the corresponding solution mappings \eqref{solution mapping for time-varying case}, and \eqref{perturbed S},  assume that
\begin{enumerate}
\item [(i)] $z(\cdot) $ is a given continuous solution trajectory (for $S$);
\item [(ii)] $p (\cdot)$ and $ \widetilde{p}(\cdot) $ are continuous functions such that for every $t \in [ 0, \, 1 ] $, $ \norm { \widetilde{p} (t) - p(t) } < \epsilon $ (with
$ \epsilon < b/4$);
\item [(iii)] $G_t$ is pointwise strongly metrically regular at $z(t) $ for $0$.
\end{enumerate}
Then there exists a continuous solution trajectory $\widetilde{z} (\cdot) $ for $\widetilde{S} $ such that, for every $t \in [ 0, \, 1 ] $, we have $ \norm { \widetilde{z} (t) - z(t) } <  \frac{4 a \epsilon}{b} $.
\end{thm}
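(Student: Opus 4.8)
The plan is to build $\widetilde{z}(t)$ pointwise, for each fixed $t\in[0,1]$, by applying the inverse map of $G_t$ to the perturbation vector, and then to upgrade this pointwise construction to a continuous trajectory using the uniform constants coming from Theorem~\ref{claim2}. First I would invoke Theorem~\ref{claim2}: under assumptions (i)--(iii) there exist $a,b,\kappa>0$, independent of $t$, such that $y\mapsto G_t^{-1}(y)\cap\B_a(z(t))$ is single-valued and $\kappa$-Lipschitz on $\B_b(0)$. Fix $t$. Since $\|\widetilde{p}(t)-p(t)\|<\epsilon<b/4<b$, the vector $w_t:=\widetilde{p}(t)-p(t)$ lies in $\B_b(0)$, so I can define
\begin{equation*}
\widetilde{z}(t):=G_t^{-1}\big(\widetilde{p}(t)-p(t)\big)\cap\B_a(z(t)).
\end{equation*}
By Remark~\ref{Perturbation Effect on the Auxiliary Map-remark}(a) (or directly by \eqref{perturbation-relation2}), $\widetilde{z}(t)\in\widetilde{G_t}^{-1}(0)$, i.e. $\widetilde{z}(t)\in\widetilde{S}(t)$, so $\widetilde{z}(\cdot)$ is indeed a solution trajectory for $\widetilde{S}$. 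The distance estimate is immediate from the Lipschitz bound: since $z(t)=G_t^{-1}(0)\cap\B_a(z(t))$ and $G_t^{-1}(\cdot)\cap\B_a(z(t))$ is $\kappa$-Lipschitz on $\B_b(0)$,
\begin{equation*}
\|\widetilde{z}(t)-z(t)\|\le\kappa\,\|\widetilde{p}(t)-p(t)\|<\kappa\epsilon.
\end{equation*}
To match the stated bound $\tfrac{4a\epsilon}{b}$ I would note that Lemma~\ref{ratio law} (proportional reduction of radii) lets me take $\kappa\le a/b$ after shrinking the radii proportionally, which is the reason the hypothesis $\epsilon<b/4$ is phrased the way it is; with $\kappa$ replaced by $a/b$ one gets $\|\widetilde{z}(t)-z(t)\|<\tfrac{a\epsilon}{b}<\tfrac{4a\epsilon}{b}$, comfortably inside $\B_a(z(t))$ so the intersection is genuinely nonempty and the construction is consistent.

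The remaining point, and the one I expect to be the real work, is continuity of $t\mapsto\widetilde{z}(t)$. Here I would argue as in the proof of Proposition~\ref{claim1}: fix $t$ and $\tau$ near $t$, and compare $\widetilde{z}(\tau)$ and $\widetilde{z}(t)$ by pushing both through a single inverse map. Using \eqref{set-relations}--type relations for the perturbed maps together with \eqref{perturbation-relation2}, one can write $\widetilde{z}(\tau)=G_t^{-1}\big(w\big)\cap\B_a(z(t))$ for a vector $w=\widetilde{p}(\tau)-p(t)+\big(p(\tau)-p(\tau)\big)$ suitably rearranged, provided $\tau$ is close enough to $t$ that $\widetilde{z}(\tau)$ still lies in $\B_a(z(t))$; this last containment follows from continuity of $z(\cdot)$ (hence $z(\tau)$ close to $z(t)$) and the uniform bound $\|\widetilde{z}(\tau)-z(\tau)\|<\tfrac{a\epsilon}{b}$, exactly the kind of ``different trajectories stay apart / localizations agree'' bookkeeping done in Step~2 of Theorem~\ref{claim2}. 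Then the $\kappa$-Lipschitz property of $G_t^{-1}(\cdot)\cap\B_a(z(t))$ gives
\begin{equation*}
\|\widetilde{z}(\tau)-\widetilde{z}(t)\|\le\kappa\big(\|\widetilde{p}(\tau)-\widetilde{p}(t)\|+\|p(\tau)-p(t)\|\big),
\end{equation*}
and the right-hand side tends to $0$ as $\tau\to t$ by continuity of $p(\cdot)$ and $\widetilde{p}(\cdot)$. Since $t$ was arbitrary, $\widetilde{z}(\cdot)$ is continuous on $[0,1]$.

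The main obstacle is not any single inequality but the consistency of the pointwise definition across nearby parameters: one must be sure that the localized inverse of $G_t$ evaluated at the shifted point really returns $\widetilde{z}(\tau)$ and not some other branch, i.e. that $\widetilde{z}(\tau)$ stays inside the ball $\B_a(z(t))$ used to localize $G_t$. This is where the slack built into the hypothesis $\epsilon<b/4$ and the proportional-radius lemma are used: they force $\|\widetilde{z}(\tau)-z(t)\|$ to be well below $a$ uniformly for $\tau$ near $t$, so all the localizations patch together and the single-valued branches coincide. Once that bookkeeping is in place, everything else is a direct application of the uniform strong metric regularity from Theorem~\ref{claim2} and the triangle inequality.
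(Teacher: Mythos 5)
Your construction coincides with the paper's first proof of this theorem (the ``pointwise construction''): define $\widetilde{z}(t)=G_t^{-1}\big(\widetilde{p}(t)-p(t)\big)\cap\B_a(z(t))$ using the uniform constants from Theorem~\ref{claim2}, verify $\widetilde{z}(t)\in\widetilde{S}(t)$ via \eqref{perturbation-relation2}, and obtain continuity by transporting $\widetilde{z}(\tau)$ into the localization of $G_t^{-1}$ through Remark~\ref{G-relations}, checking that it stays in $\B_a(z(t))$ for $\tau$ near $t$. The only difference is cosmetic and concerns the derivation of the bound $\tfrac{4a\epsilon}{b}$: you normalize $\kappa b\le a$ via Lemma~\ref{ratio law} to get $\kappa\epsilon\le\tfrac{a\epsilon}{b}$ directly (which is arguably cleaner), whereas the paper reaches the same linear control by rerunning its argument at the dyadic scales $\epsilon<b/2^{k+2}$.
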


\begin{proof}
We will present two proofs for this theorem, both are constructional methods, yet with different approaches. Remark \ref{another proof} will provide a comparison between the methods.

\emph{Method 1. Pointwise construction:}\\
Consider an arbitrary $t_0 \in [0, \, 1]$. Since $ \big( t_0, z(t_0) \big) \in \gph{\, S}$ and $G_{t_0}$ is strongly metrically regular at $z(t_0)$ for $0$, by using Theorem \ref{claim2}, we obtain that the mapping
\begin{equation*}
\B_{b} (0) \ni y \longmapsto G_{t_0} ^{-1} (y) \cap \B_{a} (z(t_0) )
\end{equation*}
is single-valued and Lipschitz continuous with constant $\kappa$.
Let $y_0 =\widetilde{p}(t_0) - p(t_0) $. For $\epsilon$ small enough (i.e. $\epsilon < b / 4$), we have $y_0 \in \B_{b} (0)$. Let
\begin{equation} \label{Existence of a Continuous Trajectory for the Perturbed Problem-01}
\widetilde{z} (t_0) := G_{t_0} ^{-1} (y_0) \cap \B_{a} (z(t_0) ).
\end{equation}
Note that the right-hand side of this expression is a singleton and so
$ \widetilde{z} (t_0) $ is exactly determined without ambiguity. Let us check if $ (t_0, \widetilde{z} (t_0) ) \in \gph{\, \widetilde{S} } $ or, equivalently, $ 0 \in \widetilde{G_{t_0}} (\widetilde{z} (t_0))$.\\
From the definition of $ \widetilde{z} (t_0) $ we have $ y_0 \in G_{t_0} ( \widetilde{z} (t_0) ) = f(\widetilde{z} (t_0) ) + F(\widetilde{z} (t_0) ) - p(t_0)$. Then, from
 $y_0 =\widetilde{p}(t_0) - p(t_0) $, one gets  $\widetilde{p}(t_0) \in f(\widetilde{z} (t_0) ) + F(\widetilde{z} (t_0) ) $ or
$ 0 \in \widetilde{G_{t_0}} (\widetilde{z} (t_0)) $.
Since $t_0$ is an arbitrary point in $[0, 1]$,
$$ t \longmapsto G_{t} ^{-1} \big( \widetilde{p}(t) - p(t) \big) \cap \B_{a} (z(t) )$$
defines a single-valued map $\widetilde{z}(\cdot)$.\\
To prove the continuity, consider a sequence $(t_n) \in [0, 1]$ converging to $t_0$. By continuity of $ \widetilde{p}(\cdot)$ and $p(\cdot)$ we know that
$ y_n := \widetilde{p}(t_n) - p(t_n) ~ \longrightarrow ~ y_0 = \widetilde{p}(t_0) - p(t_0) $.\\
By definition, $ \widetilde{z} (t_n) := G_{t_n} ^{-1} (y_n) \cap \B_a (z(t_n) )$. Remark \ref{G-relations} yields that
$$ \widetilde{z} (t_n) \in G_{t_0} ^{-1} ( \, y_n + p(t_n) - p(t_0) \, ).$$
On the other hand, $  \widetilde{z} (t_n) \in \B_a (z(t_n) ) $. We claim that $ \widetilde{z} (t_n) \in \B_a (z(t_0) ) $.\\
Indeed, for $n$ large enough, one can have the following:
\begin{equation} \label{construction-M1}
\norm{p(t_n) - p(t_0)} < \frac{b}{4},~ \norm{z(t_n) - z(t_0)} < \frac{a}{2}.
\end{equation}
Considering Lemma \ref{ratio law}, with $b' = b/2$, and $a' = a/2$, we obtain that the mapping
$$ \B_{b/2} (0) \ni y \longmapsto G_{t_0} ^{-1} (y) \cap \B_{a/2} (z(t_0) ) $$
is single-valued and Lipschitz continuous with Lipschitz constant $\kappa$.
Now observing that
\begin{equation*}
\begin{split}
\norm{ y_n + p(t_n) - p(t_0) } & \leq ~ \norm{ \widetilde{p}(t_n) - p(t_n) } + \norm{ p(t_n) - p(t_0) } \\
 & < \epsilon + \frac{b}{4} \, < \, \frac{b}{2},
\end{split}
\end{equation*}
we can define $ w_n := G_{t_0} ^{-1} \big( y_n + p(t_n) - p(t_0) \big) \cap \B_{a/2} (z(t_0) ) $ without ambiguity.\\
On the one hand, Remark \ref{G-relations} implies that $w_n \in  G_{t_n} ^{-1} ( y_n)$.\\
On the other hand, $w_n \in \B_{a/2} (z(t_0) ) \subset \B_{a} (z(t_n) ) $. Thus, $w_n \in G_{t_n} ^{-1} ( y_n) \cap \B_{a} (z(t_n) )$.\\
Since $G_{t_n} ^{-1}$ is single-valued and Lipschitz continuous when restricted to $ \B_{b} ( 0 ) \times \B_{a} (z(t_n) ) $, we obtain the equality
$ w_n = G_{t_n} ^{-1} ( y_n) \cap \B_{a} (z(t_n) ) $, and thus, $w_n = \widetilde{z} (t_n) $.
Therefore, $ \widetilde{z} (t_n) \in \B_{a/2} (z(t_0) ) \subset \B_{a} (z(t_0) ). $\\
The strong metric regularity of $G_{t_0} $ implies that
\begin{equation*}
\norm{ \widetilde{z} (t_n) - \widetilde{z} (t_0)} \, \leq \, \kappa \norm{ \, \paren{y_n + p(t_n) - p(t_0)} - y_0 \, }.
\end{equation*}
Hence, $ \norm{ \widetilde{z} (t_n) - \widetilde{z} (t_0)} $ converges to zero as $ n \rightarrow \infty $. \\
It only remains to remind that the estimate for the difference $\norm { \widetilde{z} (t) - z(t) }$ is a straightforward consequence of the way we constructed $\widetilde{z}$.
Indeed, let $ \displaystyle r( \epsilon ) := \sup_{ t \, \in \, [0, 1] } \norm{ \widetilde{z} (t) - z(t) } $. Starting from $ \epsilon < \frac{b}{4} $, we obtained $ r(\epsilon) < a $. If we let $ \epsilon < \frac{b}{8} $, a deeper look into the proof reveals that we can obtain $ r( \epsilon ) < \frac{a}{2} $, and so on. Thus, the distance $ \norm{ \widetilde{z} (t) - z(t) } $ (for every $t \, \in \, [0, 1]$) is controlled linearly by $ \epsilon $ and the proof is complete.

\emph{Method 2. Construction over an interval:}\\
Fix $t \in [0, 1]$, and let $b$ smaller if necessary such that $ \kappa b < a $. This will not affect the uniform strong metric regularity of $ G_t $ guaranteed by the assumptions of this theorem and Theorem \ref{claim2}. The uniform continuity of $ \widetilde{p} (\cdot)$, and $ z(\cdot) $ allows us to choose $ \rho >0 $ sufficiently small and independent of $t$, such that for any $ \tau \in (t - \rho, \, t + \rho ) $, the following hold:
\begin{equation} \label{construction-01}
\norm{ \widetilde{p}(\tau) - \widetilde{p} (t) } < \dfrac{b}{4}, ~~ \norm{ z(\tau) - z(t) } < \dfrac{a}{2}.
\end{equation}
Then, for any $ \tau \in (t - \rho, \, t + \rho ) $ the continuity of $ \widetilde{p}(\cdot) $ and its closeness to $p(\cdot) $ implies that
\begin{equation*}
\norm{\widetilde{p}(\tau) - p(t)} \, \leq \, \norm{\widetilde{p}(\tau) -\widetilde{p}(t) } + \norm{\widetilde{p}(t) - p(t)} < \dfrac{b}{2},
\end{equation*}
and therefore, by using Lemma \ref{ratio law} with $ b' = \frac{b}{2}$, and $ a' = \frac{a}{2}$, we obtain that the set $  G_t ^{-1} \big(\, \widetilde{p}(\tau) - p(t) \,\big)  \cap \B_{\frac{a}{2}} (z(t) ) $ is a singleton. Thus, we can define
\begin{equation} \label{Existence of a Continuous Trajectory for the Perturbed Problem-02}
\widetilde{z} (\tau) := G_t ^{-1} \big(\, \widetilde{p}(\tau) - p(t) \,\big)  \cap \B_{\frac{a}{2}} ( z(t) ) \mathrm{~~~for~ any~~ } \tau \in (t - \rho, \, t + \rho ),
\end{equation}
without ambiguity. In order to prove the continuity of this function, consider a sequence $(\tau_n)$ in $(t - \rho, \, t + \rho )$ such that $ \tau_n \longrightarrow \tau $.
Then, from the Lipschitz continuity of $ G_t ^{-1} ( \cdot ) \cap \B_a ( z(t) ) $ over $ \B_b ( 0 ) $ we obtain that
\begin{equation*}
\begin{split}
\norm{ \widetilde{z}(\tau_n) - \widetilde{z}(\tau) } & =
 \norm{ \Big[ G_t ^{-1} \big(\, \widetilde{p}(\tau_n) - p(t) \,\big)  \cap \B_a ( z(t) )  \Big] - \Big[  G_t ^{-1} \big(\, \widetilde{p}(\tau) - p(t) \,\big)  \cap \B_a ( z(t) ) \Big] } \\
& \leq \, \kappa \norm{ ~ \widetilde{p}(\tau_n) - p(t) - \paren{\widetilde{p}(\tau) - p(t) }\, } \\
& \leq \, \kappa \norm{ ~ \widetilde{p}(\tau_n) - \widetilde{p}(\tau) \, }.
\end{split}	
\end{equation*}
The continuity of $ \widetilde{p} $ implies that $ \norm{ \widetilde{z}(\tau_n) - \widetilde{z}(\tau) }  \longrightarrow 0 $ as $ \tau_n \longrightarrow \tau $. \\
It remains to show that $ \widetilde{z} $ is (part of) a solution trajectory, that is, $ \big( \tau, \widetilde{z}(\tau) \big) \in \gph{\, \widetilde{S}} $.\\
Since $ \widetilde{z} (\tau) \in G_t ^{-1} \big(\, \widetilde{p}(\tau) - p(t) \, \big) $, from Remark \ref{G-relations} we get
$ \widetilde{z} (\tau) \in G_{\tau} ^{-1} \big(\, \widetilde{p}(\tau) - p(\tau) \, \big) $. Then, Equation \eqref{perturbation-relation2} implies that
$ \widetilde{z} (\tau) \in \widetilde{G_{\tau}} ^{-1} (0)$  for any $ \tau \in (t - \rho, \, t + \rho ) $, or equivalently, $ \widetilde{z}(\tau) \in \widetilde{S}(\tau) $.\\
Up to now, we have proved that for each $ t \in [0, 1]$, we can find a solution trajectory in the interval $ (t - \rho, \, t + \rho ) $. It remains to show that this construction over different intervals remains consistent.
To be more clear, let us consider two points $t_1$, and $t_2$, with corresponding trajectory pieces $\widetilde{z_1}$, and
$\widetilde{z_2}$. Suppose that $t_1 < t_2 $ and let us consider the situation where $ t_2 - \rho < \tau < t_1 + \rho $. We should prove that
$ \widetilde{z_1} (\tau) = \widetilde{z_2} (\tau) $.

By definition, $ \widetilde{z_i} (\tau) =  G_{t_i} ^{-1} \paren{\, \widetilde{p}(\tau) - p(t_i) \,} \cap \B_a (z(t_i) ) $ for $i = 1, 2 $, and as already shown, Remark \ref{G-relations}, and Equality \eqref{perturbation-relation2} imply that $ \widetilde{z_i} (\tau) \in \widetilde{G_{\tau}} ^{-1} (0) $ for $i = 1, 2 $.\\
On the other hand, the continuity of $ z (\cdot) $, and inequalities in \eqref{construction-01} reveal that
\begin{equation*}
\norm{ z(\tau) - \widetilde{z_i}(\tau) } \, \leq \,  \norm{ z(\tau) - z(t_i) } + \norm{ z(t_i) - \widetilde{z_i}(\tau) } \\
< \, \dfrac{a}{2} \, + \, \dfrac{a}{2}.
\end{equation*}
Thus, $ \widetilde{z_i}(\tau) \in \B_a (z(\tau) ) $ for $i = 1, 2 $. Using Remarks \ref{Perturbation Effect on the Auxiliary Map} and \ref{Perturbation Effect on the Auxiliary Map-remark} for $ \widetilde{G_{\tau}}$, we obtain that the mapping
$ \widetilde{G_{\tau}} ^{-1} (\cdot) \cap \B_a (z(\tau) ) $ is single-valued and Lipschitz continuous over $\B_{\frac{3b}{4}} (0)$. So, $ \widetilde{z_1} (\tau) = \widetilde{z_2} (\tau) $, and the proof is complete.
\end{proof}

\rem \label{another proof}
\textbf{(a)} A thorough observation reveals that in fact, the two methods produce the same function mainly because of the single-valuedness of the mapping
\begin{equation*}
\B_b (0) \ni y \longmapsto G_t ^{-1} (y) \cap \B_a (z(t) ).
\end{equation*}
To be more precise, let us denote the trajectory obtained from \emph{Method 1.} by $\widetilde{z}_{M1}$, and the other one by $ \widetilde{z}_{M2} $.
Consider an arbitrary point $ t \in [0, 1] $ and a neighborhood $ (t - \rho, \, t + \rho ) $ with $ \rho > 0 $ defined in such a way that \eqref{construction-01} holds.
First observe that from Equations \eqref{Existence of a Continuous Trajectory for the Perturbed Problem-01}, and \eqref{Existence of a Continuous Trajectory for the Perturbed Problem-02} we obtain immediately that $\widetilde{z}_{M1} (t) = \widetilde{z}_{M2} (t)$. \\
Now for any $ \tau \in (t - \rho, \, t + \rho )$, we have
$ \widetilde{z}_{M1} (\tau) = G_{\tau} ^{-1} \big( \widetilde{p}(\tau) - p(\tau) \big) \cap \B_a (z(\tau) ) $.\\
We have already seen in proof \emph{Method 1.} of the previous theorem that when inequalities in \eqref{construction-M1} are satisfied (which is the case, by Condition \ref{construction-01} on
$ \rho$), it is possible to conclude that $ \widetilde{z}_{M1} (\tau) \in B_a (z(t) )$. \\
On the other hand, Remark \ref{G-relations} implies that $ \widetilde{z}_{M1} (\tau) \in G_t ^{-1} (\widetilde{p}(\tau) - p(t))$. Thus,
$ \widetilde{z}_{M1} (\tau) \in G_t ^{-1} (\widetilde{p}(\tau) - p(t))\cap B_a (z(t)) $, and by strong metric regularity of $G_t$ we can obtain the desired equality
$ \widetilde{z}_{M1} (\tau) = \widetilde{z}_{M2} (\tau) $.\\

\textbf{(b)} It is worth mentioning that the method of construction over intervals shows explicitly that Lipschitz continuity of $ \widetilde{z} (\cdot) $ could be easily obtained from Lipschitz continuity of $ \widetilde{p}(\cdot) $. But this is not something new or more than what we can obtain from the method of pointwise construction, as it was implicitly mentioned there, too. Indeed, in view of Lemma \ref{Perturbation Effect on the Auxiliary Map}, Proposition \ref{claim1}, and Corollary \ref{claim1-corollary}, we get the same result.

\begin{eg}
Let us consider the simple circuit in Figure \ref{fig: A circuit with DIAC} with a \emph{DIode for Alternative Current} (DIAC) whose $ i-v $ characteristic is given, a DC-bias $V_s = 28 \, v $, an AC signal source $v_s (t) = 2.5 \sin (4 \pi t) $, and a resistor $ R = 220 \, \Omega $. \\
\begin{figure}[ht]
	\centering
		\includegraphics[width=8cm]{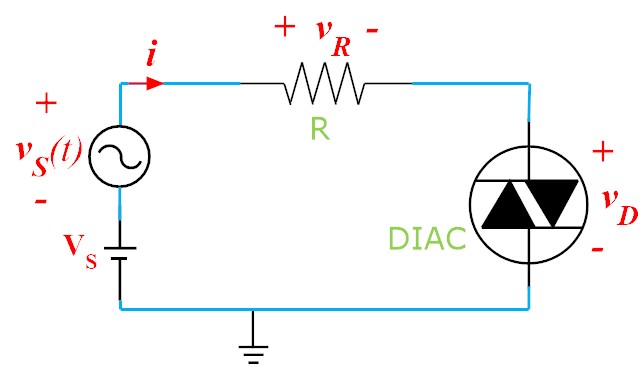}
		$ ~~~~ $
		\includegraphics[width=5.5cm]{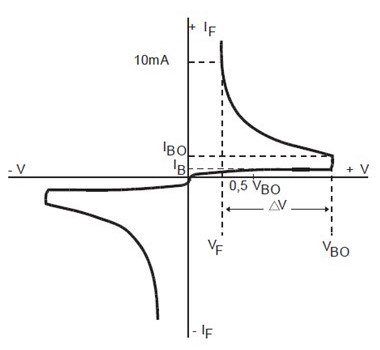}
	\caption{A circuit with DIAC, and its $ i-v $ characteristic}
	\label{fig: A circuit with DIAC}
\end{figure}
Since the line in the first part of the DIAC characteristic is very steep ($I_{B} < 1 \, \mu A, v_D = 16 \,v $), and $ I_{BO} = 100 \, \mu A $, we use a simplified model for the $ i-v $ characteristic, knowing that it does not interfere with our calculations:
{\small
\[
F_{ DIAC } (z) : = \left\{
\begin{array}{lcr}
- \dfrac{a(-z - 10^{-4}) - b}{c(-z - 10^{-4}) - d}			& &  z < -10^{-4} \\
-32 							& & -10^{-4} \leq z < 0 \\
\big[-32, \, 32 \big ] 	& & z = 0 \\
32 							& & 0 < z \leq 10^{-4} \\
 \dfrac{a(z - 10^{-4}) - b}{c(z - 10^{-4}) - d}			& &  z > 10^{-4}
\end{array} \right.
\]}where $a = 15 \, c$, $ b = 32 \, d$, and $ c = -252.52 \, d$. We assumed $ d = 0.1 $ in simulations.
Using KVL, KCL, and characteristics of components one obtains a generalized equation of the form \eqref{pge} with $p(t) = V_s + v_s (t) $, $ f(z) = Rz $  and $F$ is the $F_{ DIAC }$ map. In order to ease the calculations we use a simplification technique to rearrange the single-valued and set-valued terms in the following form
{\small
\begin{equation*}
f^*(z) : = \left\{
\begin{array}{lr}
R z - \dfrac{a(-z - 10^{-4}) - b}{c(-z - 10^{-4}) - d} + 32			 &  z < -10^{-4} \\
R z 																							 & -10^{-4} \leq z \leq 10^{-4} \\
R z + \dfrac{a(z - 10^{-4}) - b}{c(z - 10^{-4}) - d}	-32				 &  z > 10^{-4}
\end{array}, \right. ~
F^*(z) : = \left\{
\begin{array}{lr}
- 32			&   z < 0 \\
\big[ -32, \, 32 \big]			 & z = 0 \\
32				&   z > 0
\end{array} \right.
\end{equation*}}
Looking into Figure \ref{fig: DIAC-G_t} (left) and considering the fact that $ 25.5 \leq \norm{p(t)} \leq 30.5 $, it is clear that three isolated solution trajectories could be specified in the areas of non-activated ($ z = 0 $), negative resistance ($ 5.6 \times 10 ^{-4} \leq z \leq 36 \times 10^{-4} $), and forward conducting ($ z > 16 \times 10^{-3} $); see Figure \ref{fig: Solution trajectories}. For the rest of this example, we will focus on $z_2 (\cdot)$.

\begin{figure}[ht]
		\includegraphics[width=11cm]{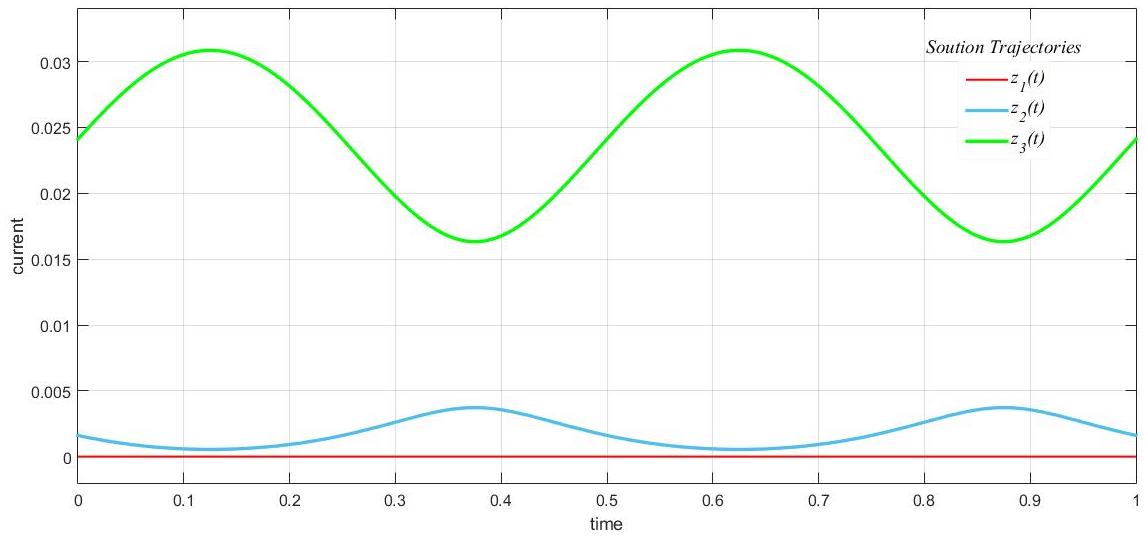}
		\caption{Solution trajectories of the DIAC circuit}
	\label{fig: Solution trajectories}
\end{figure}

\begin{figure}[ht]
	\centering
		\includegraphics[width=7cm]{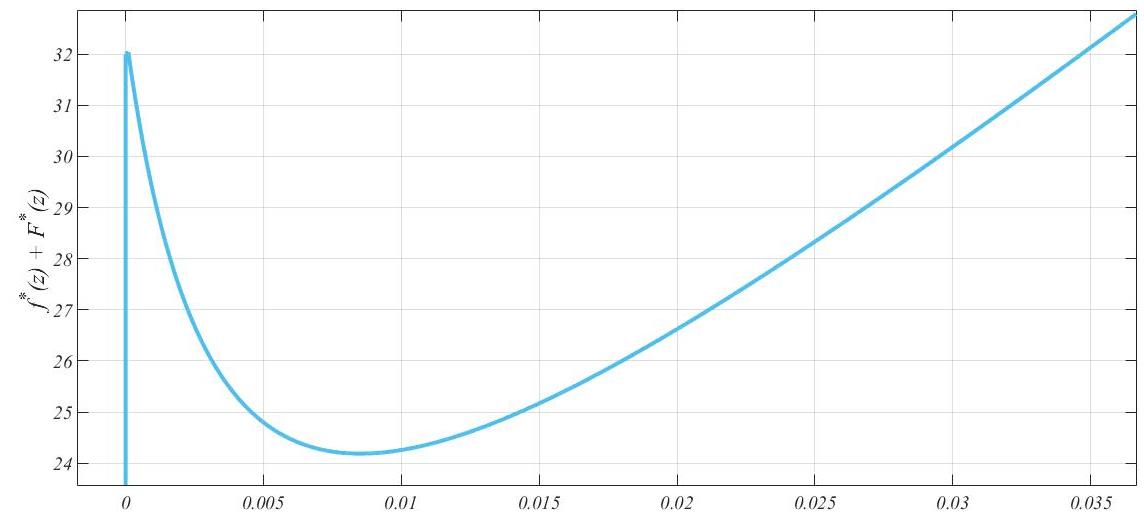}
		$~~~$
		\includegraphics[width=7cm]{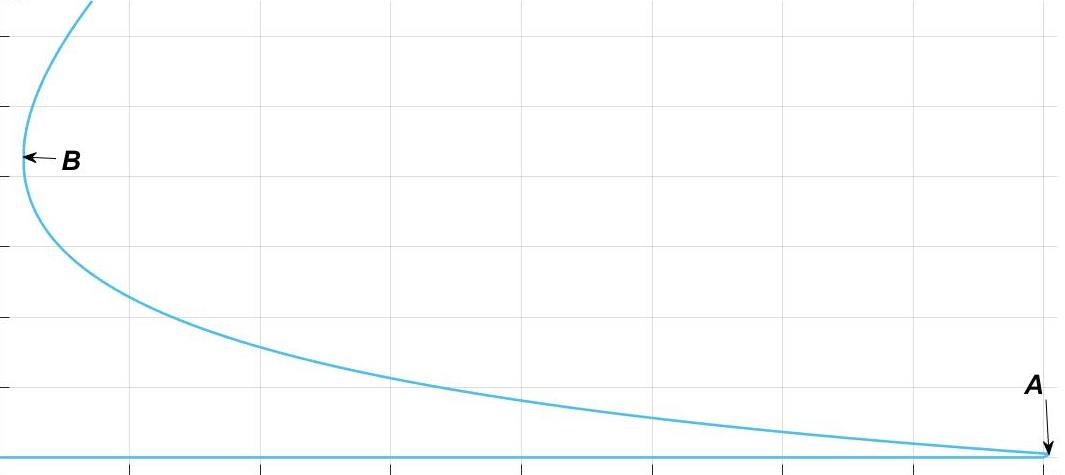}
		\caption{ $ f + F $ (left) and $G_t ^{-1}$ inverse (right) for the DIAC circuit}
	\label{fig: DIAC-G_t}
\end{figure}

In order to check the pointwise strong metric regularity of $ G_t $ at various points $\bar{z}$ for $0$, regarding \citep[Corollary 3.5.7]{Iman2017}, since all the other assumptions are satisfied it only suffices to mention that $f^*$ is not continuously differentiable at $ \bar{z} = \pm 10^{-4} $ and  $ {f^*}^{'}(\bar{z}) $ is positive for any $ \bar{z} > 5.11 \times 10^{-4} $. Hence Proposition \ref{claim1} guarantees the continuity of the solution trajectories $z_2 (\cdot)$, and $z_3 (\cdot)$.\\
To obtain the pointwise constants $a_t$, and $b_t$ that fulfil the SMR definition for $G_t$, by considering the form of $G_t^{-1}$ (Figure \ref{fig: DIAC-G_t} - right) one only needs to avoid reaching points $A$, and $B$ in order to keep away from emptiness or multivaluedness of the localized map. The Lipschitz continuity comes in hand afterwards automatically.
To be more clear, for a point on the solution trajectory $z_2(\cdot)$, say $(0, z_2(\bar{t})) \in \gph{G_t ^{-1}}$, the constants could be obtained as follows:
\begin{eqnarray*}
  a_{\bar{t}}&< &\min \big\{ \norm{z_2(\bar{t}) - y_A} , \norm{z_2(\bar{t}) - y_B} \big\} =  \min \big\{ \norm{z_2(\bar{t}) - 10^{-4}} , \norm{z_2(\bar{t}) - 85 \times 10^{-4}} \big\}, \\
  b_{\bar{t}}&< &\min \big\{ \norm{x_A} , \norm{x_B} \big\} = \min \big\{ \norm{32.022 - p(\bar{t})} , \norm{24.187 - p(\bar{t})} \big\}, \\
  \kappa_{\bar{t}}&>& \dfrac{1}{\norm{ {f^*}^{'} \big( z_2(\bar{t}) \big) }} = \Big( 220 + \dfrac{-43.097}{(25.25 \, z_2(\bar{t}) - 0.097 )^2}\Big)^{-1}.
\end{eqnarray*}
The uniform constants whose existence is guaranteed by Theorem \ref{claim2}, could be computed by considering subintervals of $ [0, 1] $, doing the calculations for each subinterval separately (for more details, see proof of \citep[Theorem 6G.1]{implicit} or \citep[Theorem 4.1.2]{Iman2017}) and finally using the following relations:
\begin{equation*}
\begin{split}
& \kappa := \max \{ \kappa_{t_i} ~|~  i = 1, ..., m \} = 1.66 \times 10^{-4}, \\
& a := \min \big \{ a_{t_i}~|~ i = 1, ..., m \big \} =4.5 \times 10^{-4}, \mathrm{~and~} \\
& b := \min \Big \{~ \dfrac{a}{\kappa} ,~ \min \big \{ b_{t_i} ~|~  i = 1, ..., m \big \} \Big \} = \min \{1.355, \, 1.313 \} = 1.313.
\end{split}
\end{equation*}
For the perturbation problem, let us consider the input signal $\tilde{p}(t) = 27.83 + 2.4 \,  \sin (4 \pi t + \frac{\pi}{64}) $ which includes a DC voltage drop, an amplitude perturbation, and phase shift (or time delay) of the AC signal with respect to the original input signal $p(t)$. \\
Since $ \displaystyle \epsilon = \max_{t} \norm{p(t) - \tilde{p}(t))} = 0.326 < \frac{b}{4} $, in view of Theorem \ref{Existence of a Continuous Trajectory for the Perturbed Problem} one expects to find a continuous solution trajectory $\widetilde{z}_2$ such that $ \norm { \widetilde{z}_2 (t) - z_2 (t) } < \frac{4 a \epsilon}{b} = 4.47 \times 10^{-4} $ for every $t \in [ 0, \, 1 ] $. Numerical calculations confirm that $  \displaystyle \max_{t} \norm{\widetilde{z}_2 (t) - z_2 (t)} = 1.9 \times 10^{-4} $, see Figure \ref{fig: Solution trajectories2}. The simulation and numerical computations has been done with Matlab software.
\begin{figure}[ht]
		\includegraphics[width=14cm]{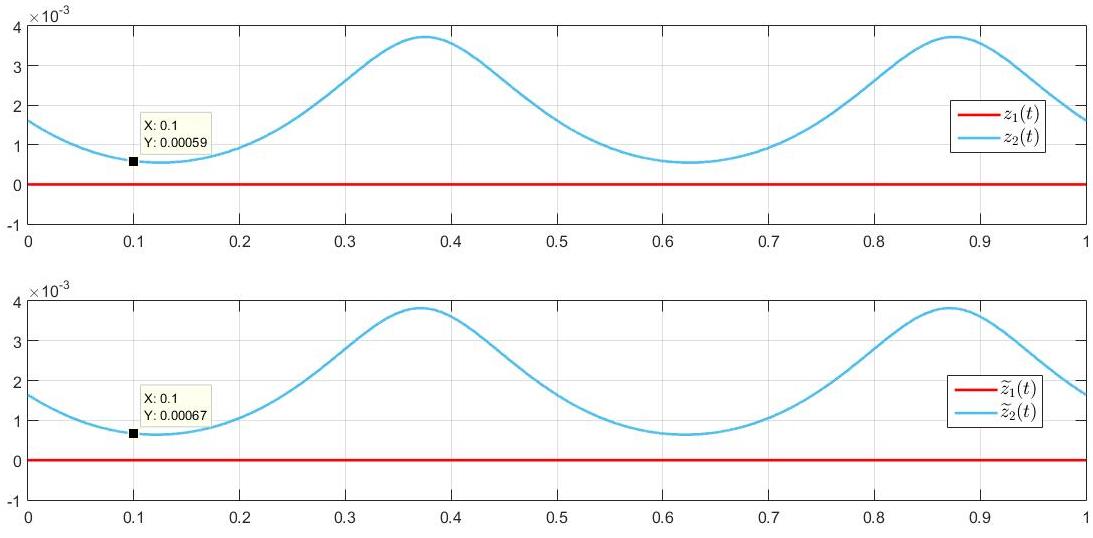}
		\caption{ Solution trajectories $z_1$, $z_2$, $\widetilde{z}_1$, and $ \widetilde{z}_2$ }
	\label{fig: Solution trajectories2}
\end{figure}
\end{eg}
\vspace{0cm}
\subsection*{Acknowledgements}
We wish to thank Professor Radek Cibulka for valuable conversations with the first author during his stay at the Department of Mathematics of the University of West Bohemia.

% ----------------------------------------------------------------------------------------
% \newpage
\label{Bibliography}
\def\bibfont{\small}
\bibliographystyle{amsplain}
% \nocite{*}

%\bibliography{mainbibadapted}

%----------------------------------------------------------------------------------------

\end{document}